\newcommand{\norm}[1]{\left\Vert#1\right\Vert}
\newcommand{\abs}[1]{\left\vert#1\right\vert}
\newtheorem{theo}{Theorem}
\newtheorem{theorem}{Theorem}[section]
\newtheorem{proposition}[theorem]{Proposition}
\newtheorem{lemma}[theorem]{Lemma}
\newtheorem{follow}[theorem]{Corollary}
\theoremstyle{definition}
\newcommand{\bel}{\begin{equation} \label}
\newcommand{\ee}{\end{equation}}
\newcommand{\pd}{\partial}
\newcommand{\R}{{\mathbb R}}
\newcommand{\N}{{\mathbb N}}
\newcommand{\bx}{{\bf x}}
\newcommand{\txi}{\tilde{\xi}}
\newcommand{\eps}{{\epsilon}}
\def\beq{\begin{equation}}
\def\eeq{\end{equation}}
\newcommand{\bea}{\begin{eqnarray}}
\newcommand{\eea}{\end{eqnarray}}
\newcommand{\beas}{\begin{eqnarray*}}
\newcommand{\eeas}{\end{eqnarray*}}
 \definecolor{mygreen}{cmyk}{1,0,1,0.1}
\begin{document}

\title{Uniqueness and stability of time and space-dependent conductivity in a
  hyperbolic cylindrical domain}

\author{L. Beilina  \thanks{
Department of Mathematical Sciences, Chalmers University of Technology and
Gothenburg University, SE-42196 Gothenburg, Sweden, e-mail: \texttt{\
larisa@chalmers.se}}
\and
M. Cristofol \thanks{Institut de Math\'{e}matiques de Marseille, CNRS, UMR 7373, \'Ecole Centrale, Aix-Marseille Universit\'e, 13453 Marseille, France}
\and
S. Li \thanks{Key Laboratory of Wu Wen-Tsun Mathematics, Chinese Academy of Sciences, School of Mathematical Sciences, University of Science and Technology of China, 96 Jinzhai Road, Hefei, Anhui Province, 230026,  China}}

\date{}

\maketitle

\begin{abstract}

  This paper is devoted to the reconstruction of the time and
  space-dependent coefficient in an infinite cylindrical hyperbolic
  domain. Using a local Carleman estimate we prove the uniqueness and
  a H\"older stability in the determining of the conductivity by a
  single measurement on the lateral boundary. Our numerical examples
  show good reconstruction of the location and contrast of the
  conductivity function in three dimensions.

\end{abstract}

\maketitle


\section{Introduction}

The present result is based on a recent work \cite{CLS15} dealing with
the inverse problem of determining the time-independent isotropic
conductivity coefficient $c : \Omega \to \R$ appearing in the
hyperbolic partial differential equation $(\partial_t^2 -\nabla \cdot
c \nabla)u=0$, where $\Omega := \omega \times \R$ is an infinite
cylindrical domain whose cross section $\omega$ is a bounded open
subset of $\R^{n-1}$, $n \geq 2$. Our goal is to extend this
reconstruction result, based on a finite number of observations, to a
more general class of conductivities: time and space-dependent
conductivities $\tilde{c}(x,t)$. The  reconstruction of
time and space-dependent coefficients in an infinite domain with a
finite number of observations is very challenging. The approach
developed here consists to retrieve any arbitrary bounded subpart of
the non-compactly supported conductivity $\tilde{c}$ from one data
taken on a compact subset of the lateral boundary $\Gamma=\partial
\Omega = \partial \omega \times (-\infty,\infty)$. Furthermore, a
stability inequality is established which links the distance between
two sets of coefficients $\tilde{c}_1(x,t)$ and $\tilde{c}_2(x,t)$
with the distance of lateral boundary observation of the Neumann
derivative of the solutions $u_1$ and $u_2$. First, this stability
inequality implies the uniqueness of the coefficient $\tilde{c}$ and
second, we can use it to perform numerical reconstruction using noisy
observations since real observations are generally noisy. Since a lot
of physical background involve time and space-dependent
conductivities, we extend the results in \cite{CLS15} by considering
the following initial boundary value problem \bel{S1}
\left\{ \begin{array}{ll} \pd_t^2 u - \nabla \cdot \tilde{c} \nabla u
  = 0 & \mbox{in}\ Q := \Omega \times (0,T), \\ u(\cdot,0) =
  \theta_0,\ \pd_t u(\cdot,0) = \theta_1 & \mbox{in}\ \Omega, \\ u = 0
  & \mbox{on}\ \Sigma:= \Gamma \times (0,T),
\end{array} \right.
\ee with initial conditions $(\theta_0,\theta_1)$, where $\tilde{c}$
is the unknown conductivity coefficient we aim to retrieve, and we
assume that $\tilde{c}$ is time and space depending in the following
form : \bel{ct} \tilde{c}(x,t) = c_0(x,t) + c(x), \ee where $c_0(x,t)$
is assumed to be known. That means that we consider the case of the
perturbation of a general time and space-dependent conductivity by a
space-dependent one. Such model is not a direct application of known
results and involves several technical difficulties connected to the
time dependence which will be detailed later. For a
similar general non-stationary media, we can refer to \cite{LY13}
where the authors study an inverse problem for Maxwell's
equations.

Several stability results in the inverse problem of
determining one or several unknown coefficients of a hyperbolic  
equation from a finite number of measurements of the solution are
available in the mathematics literature, see
\cite{Be5,Be6,BJY08,BY06,BeYa08,IY2,IY03,K01,K02,KY06, Y99} for
example, and their derivation relies on a Carleman inequality
specifically designed for hyperbolic systems. All these works concern
space-dependent coefficients.

On the other hand, no of these works
are associated to numerical simulations whereas the existence of a
stability inequality allows to improve the resolution of the
minimization problem by choosing more precisely the functional to
minimize.  Furthermore, the case of the reconstruction of the
conductivity coefficient in the divergence form for the hyperbolic
operator induces some numerical difficulties, see
\cite{BAbsorb,BNAbsorb, BH, Chow} for details.  A lot of papers are
dealing with an optimization approach without any theoretical study
and in most of the cases, the uniqueness of the associated inverse
problem is not proved.
On the other hand, we develop in this paper numerical simulations for
a problem of the reconstruction of the conductivity coefficient in the
form \eqref{ct} based on partial boundary observations similar to
whose used in our theoretical result.

In previous works \cite{BAbsorb,BNAbsorb} were presented
numerical studies of the reconstruction of the space-dependent
conductivity function in a hyperbolic equation using backscattered
data in three dimensions.  Also in \cite{Chow} were presented
numerical simulations of reconstruction of the only space-dependent
conductivity function in two-dimensions. In \cite{Chow} a
layer-stripping procedure was used instead of the Lagrangian approach
of \cite{BAbsorb,BNAbsorb}.  However, the time-dependent function as a
part of the conductivity function was not considered in the above
cited works.

In our numerical examples  of this work we tested the reconstruction of a
conductivity function that represents a sum of two space-dependent
gaussians and one time-dependent function. Since by our assumption the
time-dependent function is known inside the domain, then the goal of
our numerical experiments is to reconstruct only the space-dependent
part of the conductivity function. To do that we used Lagrangian
approach together with the domain decomposition finite element/finite
difference method of \cite{BAbsorb}. One of the important points of
this work is that in our numerical simulations we applied one non-zero
initial condition in the model problem what corresponds well to the
uniqueness and stability results of this paper.

Our three-dimensional numerical simulations show that we can
accurately reconstruct location and large contrast of the
space-dependent function which is a part of the known time-dependent
function. However, the location of this function in the third, $x_3$
direction, should be still improved.  Similarly with \cite{B, BTKB, BH} we
are going to apply an adaptive finite element method to improve
reconstruction of shape of the space-dependent function obtained in
this work.

The outline of the work is the following: in section \ref{intro} we
prove the uniqueness and stability result for the system \eqref{S1},
 in section \ref{sec:Numer-Simul} we present our numerical
simulations, and in section \ref{sec:concl}   we summarize results of our work.

\section{Mathematical background and main theoretical result}
\label{intro}
\setcounter{equation}{0}

\subsection{Notations and hypothesis}
\label{sec-intro}

Throughout this article, we keep the following notations: $x=(x',x_n) \in \Omega$ for every $x':=(x_1,\ldots,x_{n-1}) \in \omega$ and $x_n \in \R$.
Further, we denote by $| y |:=\left( \sum_{i=1}^m y_j^2 \right)^{1 \slash 2}$ the Euclidean norm of $y=(y_1,\ldots,y_m) \in \R^m$, $m \in \N^*$, and we write $\mathbb{S}^{n-1}:=\left\{ x'=(x_1,\ldots,x_{n-1}) \in \R^{n-1},\ |x'|=1 \right\}$.
We write $\pd_j$ for $\pd \slash \pd x_j$, $j \in \N_{n+1}^*:=\{ m \in \N^*,\ m \leq n+1 \}$.
For convenience the time variable $t$ is sometimes denoted by $x_{n+1}$ so that $\pd_t=\pd \slash \pd t=\pd_{n+1}$.
We set $\nabla:=(\pd_1,\ldots,\pd_n)^T$, $\nabla_{x'}:=(\pd_1,\ldots,\pd_{n-1})^T$ and
$\nabla_{x,t}=(\pd_1,\ldots,\pd_{n},\pd_t)^T$. 

For any open subset $D$ of $\R^m$, $m \in \N^*$, we note $H^p(D)$ the $p$-th order Sobolev space on $D$ for every $p \in \N$, where
$H^0(D)$ stands for $L^2(D)$. We write $\| \cdot \|_{p,D}$ for the usual norm in $H^p(D)$ and we note
$H_0^1(D)$ the closure of $C_0^{\infty}(D)$ in the topology of $H^1(D)$.

Finally, for $d>0$ we put $\Omega_d := \omega \times (-d,d)$, $Q_d:=\Omega_d \times(0, T)$, $\Gamma_{d}:= \partial \omega \times (-d, d)$ and $\Sigma_{d} := \partial \omega \times(-d, d) \times (0, T)$.

We are interested by the initial boundary value problem  (\ref{S1}). 
We shall suppose that $\tilde{c} $ fulfills the ellipticity condition
\bel{c0a}
\tilde{c} \geq c_m\ \mbox{in}\ Q,
\ee
for some positive constant $c_m$.
In order to solve the inverse problem associated with \eqref{S1} we seek solutions belonging to $\cap_{k=3}^4 C^k([0,T];H^{5-k}(\Omega))$.
Following the strategy used  in \cite{CLS15} based on the reference \cite[Sect. 3, Theorem 8.2]{LM1}, it is sufficient  to assume  that the coefficient  $\tilde{c} $ (resp. $c$)   be  in  $C^{\infty}(Q;\R)$ (resp.  be in $C^{\infty}(\Omega;\R)$)  and $\partial \omega \in C^{\infty}$,  to get the required regularity  for the solution $u$ of the system \eqref{S1}. We note $c_M$ a positive constant fulfilling
\bel{c0b}
 \| \tilde{c}  \|_{W^{4,\infty}(Q)} \leq c_M.
\ee
Since our strategy is based on a Carleman estimate for the hyperbolic system \eqref{S1}, it is also required that the condition
\bel{c1}
a' \cdot \nabla_{x'} \tilde{c} \geq \mathfrak{a}_0\ \mbox{in}\ Q,
\ee
holds for some $a'=(a_1,\ldots,a_{n-1}) \in \mathbb{S}^{n-1}$ and $\mathfrak{a}_0>0$.
Hence, given
$\omega^{\#}$ an open subset of 
$\R^{n-1}$ such that $\partial \omega \subset \omega^{\#} $, we put 
$\mathcal{O}_*=\omega^{\#}\times\R$,
and for $c_* \in C^{\infty}((\mathcal{O}_* \cap \Omega) \times(0,T);\R)$ satisfying
\bel{c0a*}
c_* \geq c_m\ \mbox{and}\ a' \cdot \nabla_{x'} c_* \geq \mathfrak{a}_0\ \mbox{in}\ (\mathcal{O}_* \cap \Omega) \times(0,T),
\ee
we introduce the set $\Lambda_{\mathcal{O}_*}=\Lambda_{\mathcal{O}_*}(a',\mathfrak{a_0},c_*,c_m,c_M)$ of admissible conductivity coefficients as
\bel{1.7}
\Lambda_{\mathcal{O}_*}:=\{ \tilde{c} \in W^{4,\infty}(Q;\R)\ \mbox{obeying}\  \eqref{ct}-\eqref{c1};\ \tilde{c}=c_*\ \mbox{in}\ (\mathcal{O}_* \cap \Omega) \times (0,T) \}.
\ee

Furthermore, it is required  that $\theta_0$ be in $W^{3,\infty}(\Omega) \cap H^5(\Omega)$ and satisfy
\bel{S10}
- a' \cdot \nabla_{x'} \theta_0 \geq \eta_0 e^{-(1+x_n^2)},x=(x',x_n) \in \ \mbox{in}\ 
\omega_*\times \R,
\ee
for some $\eta_0>0$ and some open subset $\omega_*$ in $\R^{n-1}$, with $C^2$ boundary, satisfying
\bel{ic0}
\overline{\omega \setminus (\omega^{\#}\cap\omega)} \subset \omega_* \mbox{ and } \overline{\omega_*} \subset \omega,
\ee
and that exists $M_0>0$ such that
\bel{S100}
\| \theta_0 \|_{W^{3,\infty}(\Omega)} +  \| \theta_0 \|_{H^5(\Omega)} \leq M_0.
\ee

\subsection{Main result}

 The following result claims H\"older stability in the inverse problem of determining $c$ in $\Omega_\ell$, where $\ell>0$ is arbitrary, from the knowledge of one boundary measurement  of the solution to \eqref{S1}, performed on $\Sigma_L$ for $L > \ell$ sufficiently large. The corresponding observation is viewed as a vector of the Hilbert space
$$\mathscr{H}( \Sigma_{L}) := H^3(0,T;L^2(\Gamma_{L})), $$
endowed with the norm,

$$ \norm{v}^2_{\mathscr{H}( \Sigma_{L})} :=\norm{v}^2_{H^3(0,T;L^2(\Gamma_{L}))},\ v\in
\mathscr{H}( \Sigma_{L}). $$

\begin{theorem}
\label{T.1}
Assume that $\pd \omega$ is $C^5$ and let $\mathcal{O}_*$ be a neighborhood of $\Gamma$ in $\R^{n-1}$. Assume that $c_0 \in W^{4,\infty}(Q;\R)$ and $\partial_t c_0(\cdot,0) = \partial_t^3 c_0(\cdot,0) = 0$ in $\Omega$. For $a'=(a_1,\ldots,a_{n-1}) \in \mathbb{S}^{n-1}$, $\mathfrak{a}_0>0$, $c_m \in (0,1)$, $c_M>c_m$ and $c_* \in W^{4,\infty}((\mathcal{O}_* \cap \Omega) \times (0,T);\R)$ fulfilling \eqref{c0a*}, pick $c_j$, $j=1,2$, in $\Lambda_{\mathcal{O}_*}(a',\mathfrak{a_0},c_*,c_m,c_M)$, defined by \eqref{1.7}. Further, given $M_0>0$, $\eta_0>0$ and an open subset $\omega_* \subset \R^{n-1}$ obeying \eqref{ic0}, let
$\theta_0$ fulfill \eqref{S10}-\eqref{S100}, and $\theta_1 = 0$.

Then for any $\ell>0$ we may find $L >\ell$ and $T>0$,
such that the $\bigcap_{k=0}^5 C^k([0,T],H^{5-k}(\Omega))$-solution $u_j$, $j=1,2$, to \eqref{S1} associated with $(\theta_0,\theta_1)$, where $\tilde{c}_j$ is substituted for $\tilde{c}$, satisfies
$$
\norm{\tilde{c}_1-\tilde{c}_2}_{H^1(\Omega_\ell)} \leq C  \norm{ \frac{\partial u_1}{\partial \nu}-\frac{\partial u_2}{\partial \nu}}_{\mathscr{H}( \Sigma_{L})}^\kappa.
$$
Here $C>0$ and $\kappa \in (0,1)$ are two constants depending only on $\omega$, $\ell$, $M_0$, $\eta_0$, $a'$, $\mathfrak{a}_0$, $c_\star$, $c_m$ and $c_M$.
\end{theorem}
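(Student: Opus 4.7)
Setting $u := u_1 - u_2$ and $c := c_1 - c_2$, the plan is to adapt to the present time-dependent setting the Bukhgeim--Klibanov strategy developed in \cite{CLS15}. Subtracting the two copies of \eqref{S1} gives
\begin{equation*}
\pd_t^2 u - \nabla\cdot(\tilde c_1 \nabla u) = \nabla\cdot(c\nabla u_2) \text{ in } Q, \quad u(\cdot,0)=\pd_t u(\cdot,0)=0, \quad u|_\Sigma=0.
\end{equation*}
Since $c_1=c_2=c_*$ on $\mathcal O_*\cap\Omega$, the unknown $c$ is supported away from $\Gamma$. Thanks to $\theta_1=0$ and the symmetry assumptions $\pd_t c_0(\cdot,0)=\pd_t^3 c_0(\cdot,0)=0$, one may extend $u$ as an even function of time to $\Omega\times(-T,T)$ while preserving its $\bigcap_{k=0}^{5}C^k$-regularity. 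Evaluating the equation at $t=0$ and using the vanishing Cauchy data yields the key identity $\pd_t^2 u(\cdot,0)=\nabla\cdot(c\nabla\theta_0)$, which exhibits $c$ as an effective ``initial source''.

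Next I would apply the local Carleman estimate of \cite{CLS15} on $\Omega_L\times(-T,T)$ to $\pd_t u$ and, where needed, to $\pd_t^2 u$, with a weight $e^{2s\varphi}$ where $\varphi(x,t)=e^{\lambda\psi(x,t)}$ and $\psi$ is strictly pseudo-convex with respect to $\pd_t^2-\nabla\cdot\tilde c_1\nabla$ and satisfies $\varphi(\cdot,\pm T)\le\varphi(\cdot,0)-\delta$ on the relevant region. A cut-off in the $x_n$-variable localizes the estimate to $\Omega_L$; for $L$ large enough the localization error is harmless by finite speed of propagation. The genuinely new terms, stemming from the time dependence of $c_0$ and from differentiating $\nabla\cdot(\tilde c_1\nabla u)$ in $t$, are of the form $\nabla\cdot(\pd_t^k c_0\,\nabla\pd_t^\ell u)$ with $k+\ell\le 2$; being lower order, they are absorbed by the dominant $s^3$-power of the Carleman estimate once $s$ is large, thanks to $c_0\in W^{4,\infty}(Q)$.

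Then I would invoke the Bukhgeim--Klibanov identity
\begin{equation*}
e^{2s\varphi(x,0)}|\pd_t^2 u(x,0)|^2 = \int_{-T}^0 \pd_t\bigl(e^{2s\varphi}|\pd_t^2 u|^2\bigr)\,dt + e^{2s\varphi(x,-T)}|\pd_t^2 u(x,-T)|^2,
\end{equation*}
integrate it over $\Omega_L$, apply Cauchy--Schwarz together with the Carleman estimate, and discard the negligible $t=-T$ contribution by the choice of $\varphi$. This produces
\begin{equation*}
\int_{\Omega_L}|\nabla\cdot(c\nabla\theta_0)|^2 e^{2s\varphi(x,0)}\,dx \le C e^{Cs}\norm{\pd_\nu u_1-\pd_\nu u_2}_{\mathscr{H}(\Sigma_L)}^2 + \frac{C}{s}\int_{\Omega_L}|c|^2 e^{2s\varphi(x,0)}\,dx.
\end{equation*}
Expanding $\nabla\cdot(c\nabla\theta_0)=\nabla c\cdot\nabla\theta_0+c\Delta\theta_0$ and exploiting the transversality \eqref{S10} of $\theta_0$ in the direction $a'$, I would invert this first-order operator by integrating along the $a'$-characteristics issued from $\pd\omega$ where $c$ vanishes, upgrading the weighted $L^2$ bound on $\nabla\cdot(c\nabla\theta_0)$ into a weighted $H^1$ bound on $c$. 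Choosing $\ell<L$ so that $\varphi(\cdot,0)\ge\varphi_\ell>0$ on $\Omega_\ell$, using the a priori bound $\norm{c}_{H^1(\Omega_L)}\le 2c_M$, and optimizing $s$ against $\norm{\pd_\nu u_1-\pd_\nu u_2}_{\mathscr{H}(\Sigma_L)}$ then produces the announced H\"older estimate for some $\kappa\in(0,1)$.

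The main obstacle will be the time dependence of $c_0$: the even-extension trick of \cite{CLS15} must be re-justified (which is precisely what the assumptions $\pd_t c_0(\cdot,0)=\pd_t^3 c_0(\cdot,0)=0$ ensure), pseudo-convexity of $\psi$ must be verified for the time-varying principal coefficient $\tilde c_1(x,t)$, and the additional drift-like lower-order terms produced at each time differentiation must be absorbed by a careful bookkeeping of powers of $s$ rather than by any new inequality. The infinite-cylinder aspect, and the passage from lateral data on $\Sigma_L$ to an $H^1$ bound on $\Omega_\ell$, are then handled exactly as in \cite{CLS15} by a suitable joint choice of $L$, $\lambda$ and $s$.
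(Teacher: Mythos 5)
Your proposal follows essentially the same route as the paper: linearization $u=u_1-u_2$ with source $f_c=\nabla\cdot(c\nabla u_2)$, even extension in time justified by $\theta_1=0$ and $\partial_t c_0(\cdot,0)=\partial_t^3 c_0(\cdot,0)=0$, repeated time differentiation, a Carleman estimate with weight $\varphi=e^{\gamma\psi}$, $\psi=|x'-\delta a'|^2-x_n^2-t^2$, a cut-off in the $x_n$-variable, the Bukhgeim--Klibanov evaluation at $t=0$ combined with the transversality condition \eqref{S10} to convert the bound on $\nabla\cdot(c\nabla\theta_0)$ into a weighted $H^1$ bound on $c$, and a final optimization in $s$. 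Two points in your sketch need repair. First, the commutator error created by the cut-off $\chi(x_n)$ is \emph{not} harmless by finite speed of propagation: neither $u$ nor $c$ has any reason to vanish for $|x_n|$ near $L$, however large $L$ is, since $c_1=c_2$ is only imposed near the lateral boundary $\partial\omega$ and $\theta_0$ is active for all $x_n$. The operative mechanism is the decay of $\psi$ in $x_n$: on the support of the commutator the weight satisfies $\varphi\leq\tilde d_\ell$ (see \eqref{d7}), while $\varphi(\cdot,0)\geq d_\ell>\tilde d_\ell$ on $\overline{\omega}\times[-\ell,\ell]$ (see \eqref{d5}), so the localization error only contributes terms of size $e^{2s\tilde d_\ell}$ times a priori bounded quantities, which are beaten in the final optimization in $s$; this is exactly the term $s^3e^{2s\tilde d_\ell}\|u^{(k)}\|_{1,Q_L}^2$ in \eqref{3.18}. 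Second, you defer the pseudo-convexity verification for the time-dependent principal coefficient as a matter of ``bookkeeping of powers of $s$,'' but the new contributions of $\partial_t\tilde c$ enter the Poisson-bracket quantity $J$ of \eqref{c11} itself, i.e.\ the principal positivity condition, not the lower-order remainders: they force the strengthened largeness condition \eqref{d2} on $\delta_0$ (the extra summand proportional to $g_\ell(\delta)c_M/c_m$ coming from $|\partial_{n+1}\tilde c|$ in \eqref{c15}), which in turn constrains the admissible $L$ and $T$ through \eqref{c2}--\eqref{c4}. They cannot be absorbed by taking $s$ large; this is where the genuinely new work of the proof lies.
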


The main difficulties associated to the time dependence of the conductivity coefficient $\tilde{c}$ appear on one hand in the proof of the Carleman estimate for second order hyperbolic operators. On the other hand, the Bukhgeim-Klibanov method uses intensively time differentiation. In the case of time and space-dependent coefficients we have to manage a lot of additive terms with respect to the case of only space-dependent conductivity. Nevertheless, the result obtained in  Theorem \ref{T.1} is similar to the main theorem established in \cite{CLS15}.

\subsection{A Carleman estimate for second order hyperbolic operators with time dependent coefficient in cylindrical domains}

In this section we establish a global Carleman estimate for the system \eqref{S1} and in view of the inverse problem,  we start by time-symmetrizing the solution $u$ of \eqref{S1}. Namely, we put
\bel{t-sym}
u(x,t) := u(x,-t),\ x \in \Omega,\ t \in (-T,0).
\ee
and
\bel{c-sym}
c_0(x,t) := c_0(x,-t),\ x \in \Omega,\ t \in (-T,0).
\ee

We consider the operator
\bel{c1b}
A:=A(x,t,\pd)=\partial_t^2 - \nabla \cdot \tilde{c} \nabla + R,
\ee
where $R$ is a first-order partial differential operator with $L^\infty(Q)$ coefficients.  For simplicity, we put $Q:=\Omega\times(-T,T)$, $\Sigma:=\Gamma\times(-T, T)$, and $\Sigma_L:=\partial\omega\times(-L,L)\times(-T,T)$ for any $L>0$, in the remain part of this section.

 We define for every $\delta>0$, $\gamma>0$ , and $a'\in \mathbb{S}^{n-1}$ fulfilling \eqref{c1}, the following weight functions:
\bel{c3}
\psi(x,t)=\psi_\delta(x,t):=| x'- \delta a' |^2-x_n^2-t^2\ \mbox{and}\
\varphi(x,t)=\varphi_{\delta,\gamma}(x,t):={\rm e}^{\gamma \psi(x,t)},\ (x,t) \in Q.
\ee

\begin{proposition}
\label{pr-ic}
Let $A$ be defined by \eqref{c1b}, where $\tilde{c}$ verifies \eqref{c0a}--\eqref{c1}, and let $\ell$ be positive. Then there exist $\delta_0>0$ and $\gamma_0>0$ such that for all
$\delta \geq \delta_0$ and $\gamma \geq \gamma_0$, we may find $L>\ell$, $T>0$ and $s_0>0$ for which the estimate
\bel{c16}
s \sum_{j=0,1} s^{2(1-j)} \| {\rm e}^{s \varphi} \nabla_{x,t}^j v \|_{0,Q_L}^2
\leq C \left( \| {\rm e}^{s\varphi} A v \|_{0,Q_L}^2 + s \sum_{j=0,1} s^{2(1-j)} \| {\rm e}^{s\varphi} \nabla_{x,t}^j v \|_{0,\pd Q_L}^2 \right),
\ee
holds for any $s \geq s_0$ and $v \in H^2\left(Q_L\right)$.
Here $C$ is a positive constant depending only on $\omega$, $a'$, $\mathfrak{a}_0$, $\delta_0$, $\gamma_0$, $s_0$, $c_m$ and $c_M$.

Moreover there exists a constant $d_\ell>0$, depending only on $\omega$, $\ell$, $\delta_0$ and $\gamma_0$, such that the weight function $\varphi$ defined by \eqref{c3} satisfies
\bel{d5}
\varphi(x', x_n, 0) \geq d_\ell,\ (x', x_n) \in \overline{\omega} \times [-\ell, \ell],
\ee
and we may find $\eps \in (0, (L-\ell) \slash 2)$ and $\zeta >0$ so small that we have:
\bea
\max_{x \in\overline{\omega}\times [-L, L]}
\varphi(x', x_n, t) & \leq & \tilde{d}_\ell:=d_\ell {\rm e}^{-\gamma  \zeta^2},\ |t|\in [T-2\eps, T], \label{d6} \\
\max_{(x', t) \in \overline{\omega} \times [-T, T]} \varphi(x', x_n, t) & \leq & \tilde{d}_\ell,\ |x_n|\in [L-2\eps, L]. \label{d7}
\eea
\end{proposition}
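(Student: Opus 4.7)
The plan is to derive the Carleman estimate by the classical conjugate-operator method adapted to a second-order hyperbolic operator with time-dependent principal part, and then to verify the three geometric inequalities on $\varphi$ by direct computation. The underlying structure is identical to the one of \cite{CLS15}, but all steps have to be redone because $\tilde c$ now depends on $t$ as well.

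Step 1 (pseudoconvexity). First I would check that, for $\delta\geq \delta_0$ and $\gamma\geq\gamma_0$ large enough, $\psi$ is strongly pseudoconvex with respect to $P_0:=\pd_t^2-\nabla\cdot\tilde c\nabla$ on $\overline{Q_L}$ uniformly in $L>\ell$. The principal symbol is $p_2(x,t;\xi,\tau)=\tau^2-\tilde c(x,t)|\xi|^2$, and on the characteristic set $\{p_2=0\}$ the Hörmander condition reduces, modulo positive $\gamma$-contributions, to positivity of a quadratic form in $(\xi,\tau)$ whose dominant part is $|\xi|^2+\tau^2$ supplemented by a term proportional to $a'\cdot\nabla_{x'}\tilde c$, positive by \eqref{c1}. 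Choosing $\delta$ large forces $\nabla_{x'}\psi=2(x'-\delta a')$ to point close to $-a'$, so combining \eqref{c1} with the quadratic structure of $\psi$ in $x_n$ and $t$ gives a strictly positive lower bound independent of $L$.

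Step 2 (conjugate operator calculus). I would set $w=e^{s\varphi}v$ and write $e^{s\varphi}P_0(e^{-s\varphi}w)=P_1 w+P_2 w$, with $P_1$ collecting the formally self-adjoint terms and $P_2$ the anti-self-adjoint ones. From $\|e^{s\varphi}P_0 v\|_{0,Q_L}^2\geq 2(P_1 w,P_2 w)_{0,Q_L}$, integration by parts yields an interior quadratic form $Q(w)$ plus boundary contributions on $\pd Q_L$. Step 1 together with a routine estimate of the lower-order commutators ensures
\[
Q(w)\geq c\bigl(s^3\|e^{s\varphi}v\|_{0,Q_L}^2+s\|e^{s\varphi}\nabla_{x,t}v\|_{0,Q_L}^2\bigr)
\]
for $s\geq s_0$ large. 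The time dependence of $\tilde c$ only produces extra terms of the form $s^2(\pd_t\tilde c)\varphi^2$ and $s(\pd_t\tilde c)\varphi\,\pd_t w$, all of order at most $s^2$, which are absorbed by the cubic-in-$s$ dominant part thanks to the $W^{4,\infty}(Q)$ bound on $\tilde c$. The first-order perturbation $R$ is then absorbed the same way by enlarging $s_0$. The boundary integrals on $\pd Q_L$ coming from the integrations by parts are exactly those on the right-hand side of \eqref{c16}, since no sign condition on $\nabla\psi\cdot\nu$ is imposed on $\pd Q_L$.

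Step 3 (geometric inequalities). The three inequalities \eqref{d5}–\eqref{d7} follow by direct computation on $\psi$. At $t=0$ one has $\psi(x',x_n,0)=|x'-\delta a'|^2-x_n^2\geq (\delta-\mathrm{diam}(\omega))^2-\ell^2$, strictly positive for $\delta$ large, whence \eqref{d5} with $d_\ell:=e^{\gamma((\delta-\mathrm{diam}(\omega))^2-\ell^2)}$. For \eqref{d6}–\eqref{d7}, choosing $T$ (resp.\ $L$) so large that $T^2$ (resp.\ $L^2$) exceeds $\sup_\omega|x'-\delta a'|^2$ by a fixed amount creates a strict gap between $\psi$ on the relevant portion of $\pd Q_L$ and $\psi$ on $\overline\omega\times[-\ell,\ell]\times\{0\}$; by continuity of $\psi$ the gap persists on an $\eps$-tube, producing the constants $\zeta,\eps>0$.

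The main obstacle is Step 2: the commutators $[P_0,e^{s\varphi}]$ generate cross-terms in which $\pd_t\tilde c$ interacts with $\pd_t\varphi=-2\gamma t\,\varphi$, which are not present in \cite{CLS15}. Keeping their contribution subordinate to the positive pseudoconvex quadratic form requires a careful tracking of powers of $s$, $\gamma$ and of the $L^\infty$-norms of $\pd_t\tilde c$ and $\pd_t^2\tilde c$; only then can the large-$s$ absorption argument be closed uniformly in $\gamma\geq\gamma_0$.
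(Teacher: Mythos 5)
Your overall strategy (a pseudoconvex weight $\varphi=e^{\gamma\psi}$, verification of H\"ormander's condition, then the conjugated-operator estimate) is the same family of argument as the paper's, which simply verifies the non-characteristic condition \eqref{c10} and the positivity of the quantity $J$ in \eqref{c11} on the set \eqref{c12}--\eqref{c13} and then invokes Isakov \cite[Theorem 3.2.1']{I} instead of redoing the integration by parts. However, your proposal has two genuine gaps. First, you misplace the difficulty created by $\partial_t\tilde c$. You claim in Step 2 that the new terms are ``of order at most $s^2$'' and get absorbed by the cubic-in-$s$ part for $s$ large. They do not: $\partial_t\tilde c$ enters through $\bigl(\partial_{n+1}\tfrac{\partial A_2}{\partial\xi_j}\bigr)\tfrac{\partial A_2}{\partial\xi_k}-(\partial_{n+1}A_2)\tfrac{\partial^2A_2}{\partial\xi_j\partial\xi_k}$, i.e.\ at the \emph{leading} order of the pseudoconvexity quantity $J$, on the characteristic set where neither large $s$ nor large $\gamma$ helps. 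These terms (bounded below in the paper by $-4\tfrac{T}{\tilde c}\,|\partial_{n+1}\tilde c|\,\xi_{n+1}^2$) can only be dominated by enlarging $\delta_0$, which is exactly why \eqref{d2} carries the extra summand $2+\tfrac{g_\ell(\delta)}{c_m}c_M$ relative to \cite{CLS15}. You correctly flag this as ``the main obstacle'' at the end, but you leave it unresolved, and the resolution you sketch (large-$s$ absorption) is the wrong mechanism.

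Second, your Step 3 chooses $T$ and $L$ ``so large that $T^2$ (resp.\ $L^2$) exceeds $\sup_{\omega}|x'-\delta a'|^2$.'' This is incompatible with Steps 1--2: the non-characteristic condition $A_2(\mathbf{x},\nabla_{\mathbf{x}}\psi)=4\bigl(\tilde c(|x'-\delta a'|^2+x_n^2)-t^2\bigr)>0$ forces $T<c_m^{1/2}\inf_{x'\in\overline\omega}|x'-\delta a'|$ (this is \eqref{c4}, with $c_m\in(0,1)$), and the lower bound on $J$ degrades linearly in $T$ and $L$ (see \eqref{c2}), so neither can be taken of size $\delta$. The separation \eqref{d6}--\eqref{d7} only requires $(T-2\eps)^2\geq \sup_{\omega}|x'-\delta a'|^2-\inf_{\omega}|x'-\delta a'|^2+\ell^2+\zeta^2$, i.e.\ $T,L$ just above $g_\ell(\delta)$ defined in \eqref{d1}, a quantity of order $\delta^{1/2}$, which is why both constraints can be met simultaneously for $\delta$ large. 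This balancing act --- $T,L$ large enough for the weight to drop below $\tilde d_\ell$ near $|t|=T$ and $|x_n|=L$, yet small enough to preserve pseudoconvexity --- is the actual content of the choice of $\delta_0$, $L$, $T$ in the proposition, and your proposal as written breaks it.
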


\begin{proof}
We mimic  a part of the proof from our previous paper \cite{CLS15}, first adapting the definition of $\delta_0$ in \eqref{d2} because the time dependence of the conductivity introduces several difficulties. We stress out the main change with respect to the time independent version.

We define first  $\delta_0$, $L$ and $T$ and following the notations introduced in \cite{CLS15} we introduce 
\bel{d1}
g_\ell(\delta) = \left( \sup_{x' \in \omega} |x' - \delta a' |^2 - \inf_{x' \in \omega} |x' - \delta a' |^2 + \ell^2 \right)^{1 \slash 2}
\ee
then exists $\delta_0>0$ so large that

\bel{d2}
\delta \mathfrak{a}_0 >  \left( \left( 1 + \frac{2\sqrt{n} }{{c_m}^{1 \slash 2}} \right) g_\ell(\delta) + \sqrt{n-1} | \omega |+ 2 \right) c_M + 2+ \frac{g_\ell(\delta)}{c_m} c_M,\ \delta \geq \delta_0,
\ee
where $\mathfrak{a}_0$, $a'$ are introduced in \eqref{c1}.\\

Further, since $\omega$ is bounded and $a' \neq 0_{\R^{n-1}}$ by \eqref{c1}, we may as well assume upon possibly enlarging $\delta_0$, that we have in addition
$c_m^{1 \slash 2} \inf_{x' \in \overline{\omega}} | x' - \delta a' | > g_\ell(\delta)$ for all $\delta \geq \delta_0$.
This and \eqref{d2} yield that there exists $\vartheta>0$ so small that the two following inequalities

\bel{c2}
\delta \mathfrak{a}_0 - \left( L+ \sqrt{n-1}| \omega | + 2 \left( 1 + \frac{\sqrt{n} T}{{c_m}^{1 \slash 2}} \right) \right) c_M - 2 -  \frac{T}{c_m}c_M>0,
\ee

and
\bel{c4}
c_m^{1 \slash 2} \inf_{x' \in \overline{\omega}} | x' - \delta a' | > T,
\ee
hold simultaneously for every $L$ and $T$ in $(g_\ell(\delta), g_\ell(\delta)+\vartheta)$, uniformly in $\delta \geq \delta_0$.\\
It can be underlined that the time dependence of $\tilde{c}$ implies to reformulate the equations \eqref{d2} and \eqref{c2} with respect to the similar one in \cite{CLS15}.

Now, we come back to the proof of \eqref{c16}. 
Our approach is based  on Isakov \cite[Theorem 3.2.1']{I}. We put $\bx:=(x,t)$ for $(x,t) \in Q_L$ and $\nabla_{\bx}=(\pd_1,\ldots,\pd_{n},\pd_{n+1})^T$. We also write $\xi'=(\xi_1,\ldots,\xi_{n-1})^T \in \R^{n-1}$, $\xi=(\xi_1,\ldots,\xi_n)^T \in \R^n$ and $\txi=(\xi_1,\ldots,\xi_n,\xi_{n+1})^T \in \R^{n+1}$. We call $A_2$ the principal part of the operator $A$, that is
$A_2=A_2(\bx,\pd)=\pd_t^2- \tilde{c}(x,t) \Delta$, and denote its symbol by $A_2(\bx,\txi)=\tilde{c}(x,t) | \xi |^2 - \xi_{n+1}^2$, where $| \xi |= \left( \sum_{j=1}^n \xi_j^2 \right)^{1 \slash 2}$.
Since $A_2(\bx,\nabla_{\bx} \psi(\bx)) = 4 \left( \tilde{c}(x,t) ( | x' - \delta a' |^2 + x_n^2 ) - x_{n+1}^2 \right)$ for every $\bx \in \overline{Q}_L$, we have
\bel{c10}
A_2(\bx,\nabla_{\bx} \psi(\bx))>0,\ \bx \in \overline{Q}_L,
\ee
by \textcolor{blue}{\eqref{c0a}} and \eqref{c4}.
For all $\bx \in \overline{Q}_L$ and $\txi \in \R^{n+1}$, put
\bel{c11}
J(\bx,\txi) = J =\sum_{j,k=1}^{n+1} \frac{\pd A_2}{\pd \xi_j} \frac{\pd A_2}{\pd \xi_k} \pd_j \pd_k \psi +
\sum_{j,k=1}^{n+1} \left( \left( \pd_k \frac{\pd A_2}{\pd \xi_j} \right) \frac{\pd A_2}{\pd \xi_k} - (\pd_k A_2) \frac{\pd^2 A_2}{\pd \xi_j \pd \xi_k}
\right) \pd_j \psi,
\ee
where we write $\partial_j$, $j \in \N_{n+1}^*$, instead of $\pd \slash \pd x_j$, and $x_{n+1}$ stands for $t$.
We assume that
\bel{c12}
A_2(\bx,\txi)=\tilde{c}(x,t) | \xi |^2 - \xi_{n+1}^2 = 0,\ x \in \overline{\Omega},\ t \in (0,T),\ \txi \in \R^{n+1} \backslash \{ 0 \},
\ee
and that 
\bel{c13}
\nabla_{\txi} A_2(x,\txi) \cdot \nabla_{\bx} \psi(\bx) = 4 \left[ \tilde{c}(x,t) ( \xi' \cdot (x' - \delta a') - \xi_n x_n ) + \xi_{n+1} x_{n+1} \right]=0,\
\ee
for $\bx \in \overline{Q}_L,\ \txi \in \R^{n+1} \backslash \{ 0 \}$
 and we shall prove that $J(\bx,\txi)>0$ for any $(\bx,\txi) \in \overline{Q}_L \times
\left\{\R^{n+1}\setminus\{0\}\right\}$.\\
 To this end we notice that the first sum in the right hand side of \eqref{c11} reads
$$\langle \mbox{Hess}(\psi) \nabla_{\txi} A_2 , \nabla_{\txi} A_2 \rangle = 8 \left( \tilde{c}^2 ( | \xi' |^2 - \xi_n^2 ) - \xi_{n+1}^2 \right),$$
and that 

\beas
& \sum_{j,k=1}^{n+1} \left( \left( \pd_k \frac{\pd A_2}{\pd \xi_j} \right) \frac{\pd A_2}{\pd \xi_k} - (\pd_k A_2) \frac{\pd^2 A_2}{\pd \xi_j \pd \xi_k} \right) \pd_j \psi =
   2 \tilde{c} \left( 2 (\nabla \tilde{c} \cdot \xi) (\nabla \psi \cdot \xi)  -  (\nabla \tilde{c} \cdot \nabla \psi) | \xi |^2 \right) \\
&- 4 (\partial_{n+1} \tilde{c}) \xi_{n+1} (\nabla\psi \cdot \xi) + 2 (\partial_{n+1} \tilde{c}) (\partial_{n+1} \psi) |\xi |^2,
\eeas
since from the time dependence of $\tilde{c}$, 
$$\left(\pd_k \frac{\pd A_2}{\pd \xi_j} \right) \frac{\pd A_2}{\pd \xi_k} - (\pd_k A_2) \frac{\pd^2 A_2}{\pd \xi_j \pd \xi_k}\neq 0$$
 if either $j$ or $k$ is equivalent to $n+1$.

Therefore we have
\beas
J  &= & 4 \left[ 2 \tilde{c}^2 (|\xi'|^2 - \xi_{n}^2) - \left( 2 + (x'-\delta a') \cdot \nabla_{x'} \tilde{c} - x_n \pd_n \tilde{c} \right) \xi_{n+1}^2 - 2 x_{n+1} \xi_{n+1} (\nabla \tilde{c} \cdot \xi) \right]\\
 &&   + 8 \frac{\partial_{n+1}\tilde{c}}{\tilde{c}} x_{n+1}\xi_{n+1}^2 -4t (\partial_{n+1} \tilde{c})  |\xi |^2
\eeas
from \eqref{c12}-\eqref{c13}. Further, in view of \eqref{c12} we have
 $$\tilde{c}^2 ( | \xi'|^2 - \xi_n^2 ) \geq - \tilde{c}^2 | \xi |^2 \geq -\tilde{c} \ \xi_{n+1}^2 $$
  and
$$| \nabla \tilde{c} \cdot \xi | \leq | \nabla \tilde{c}| | \xi |  \leq ( | \nabla \tilde{c} | \slash \tilde{c}^{1 \slash 2} )| \xi_{n+1} |,$$
and the additive term with respect to the paper [CLS] is underestimated by
  $$-4\frac{T}{\tilde{c}}|\partial_{n+1} \tilde{c} | \xi_{n+1}^2,$$
   then,
\bel{c15}
J \geq 4 \left[ \delta  a' \cdot \nabla_{x'} \tilde{c} -  \left( x' \cdot \nabla_{x'} \tilde{c} -x_n\partial_n \tilde{c}+ 2 \tilde{c} + 2 T \frac{| \nabla \tilde{c} |}{\tilde{c}^{1 \slash 2}} + 2 +  \frac{T}{\tilde{c}}|\partial_{n+1} \tilde{c}| \right) \right] \xi_{n+1}^2.
\ee

Here we used the fact that $x_{n+1}=t \in [0,T]$. Due to \eqref{c0a}-\eqref{c1}, the right hand side of \eqref{c15} is lower bounded, up to the multiplicative constant $4 \xi_{n+1}^2$, by the left hand side of \eqref{c2}. Since $\xi_{n+1}$ is non zero by \eqref{c0b} and \eqref{c12}, then we obtain $J(\bx,\txi)>0$ for all $(\bx,\txi) \in \overline{Q}_L \times \left\{\R^{n+1}\setminus\{0\}\right\}$. With reference to
\eqref{c10}, we may apply \cite[Theorem 3.2.1']{I}, getting two constants $s_0=s_0(\gamma)>0$ and $C>0$ such that \eqref{c16}
holds for any $s \geq s_0$ and $v \in H^2(Q_L)$.

The end of the proof is similar  to the third part in the proof of Proposition 3.1 of \cite{CLS15}.

\end{proof}
Now we can derive from Proposition \ref{pr-ic} a global Carleman estimate for the solution to the boundary value problem
\bel{c18}
\left\{  \begin{array}{ll} \pd_t^2 u - \nabla \cdot \tilde{c} \nabla u = f & \mbox{in}\ Q, \\
  u= 0 & \mbox{on}\ \Sigma,
\end{array} \right.
\ee
where $f \in L^2(Q)$. 
To this purpose we introduce a cut-off function $\chi \in C^2(\R;[0,1])$, such that
\bel{c21}
\chi(x_n):= \left\{ \begin{array}{cl} 1 & \mbox{if}\ |x_n| < L - 2 \eps, \\ 0 & \mbox{if}\ |x_n| \geq L - \eps, \end{array} \right.
\ee
where $\eps$ is the same as in Proposition \ref{pr-ic}, and we set
$$u_\chi(x,t):=\chi(x_n) u(x,t)\ \mbox{and}\ f_\chi(x,t):=\chi(x_n) f(x,t),\ (x,t) \in Q. $$

\begin{follow}
\label{cor-ec}

Let $f \in L^2(Q)$. Then, under the conditions of Proposition \ref{pr-ic},
there exist two constants $s_*>0$ and $C>0$, depending only on $\omega$, $\ell$, $M_0$, $\eta_0$, $a'$, $\mathfrak{a}_0$, $c_m$ and $c_M$, such that the estimate
$$
s \sum_{j=0,1} s^{2(1-j)} \| {\rm e}^{s \varphi} \nabla_{x,t}^j u \|_{0,Q_L}^2
\leq C \left(  \| {\rm e}^{s \varphi} f \|_{0,Q_L}^2 + s^3 {\rm e}^{2 s \tilde{d}_\ell}  \| u \|_{1,Q_L}^2  + s \sum_{j=0,1} s^{2(1-j)} \| {\rm e}^{s \varphi} \nabla_{x,t}^j u_\chi \|_{0,\Sigma_L}^2 \right),
$$
holds for any solution $u$ to \eqref{c18}, uniformly in $s \geq s_*$.
\end{follow}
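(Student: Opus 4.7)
The plan is to apply Proposition~\ref{pr-ic} to the truncated function $v := u_\chi = \chi(x_n) u(x,t)$, which lies in $H^2(Q_L)$ by the regularity of $u$. Since $\chi$ depends only on $x_n$, direct computation gives
\[
A u_\chi = \chi f + \mathcal{R}(u),
\]
where $\mathcal{R}(u)$ is a first-order linear differential operator applied to $u$ whose coefficients involve $\chi'(x_n)$ and $\chi''(x_n)$, and are therefore supported in the annular strip $\Xi := \{L-2\eps \leq |x_n| \leq L-\eps\}$. By \eqref{d7}, $\varphi \leq \tilde{d}_\ell$ throughout $\Xi\times(-T,T)$, so when one estimates $\|e^{s\varphi} A u_\chi\|_{0,Q_L}^2$ the commutator contribution is dominated by a constant multiple of $e^{2s\tilde{d}_\ell}\|u\|_{1,Q_L}^2$, while the piece $\chi f$ simply gives $\|e^{s\varphi} f\|_{0,Q_L}^2$.

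Next I would dissect the boundary contribution $s\sum_{j=0,1} s^{2(1-j)}\|e^{s\varphi}\nabla_{x,t}^j u_\chi\|_{0,\partial Q_L}^2$ by splitting $\partial Q_L$ into the lateral piece $\Sigma_L$, the $x_n$-caps $\omega\times\{\pm L\}\times(-T,T)$, and the time caps $\Omega_L\times\{\pm T\}$. On the $x_n$-caps both $\chi$ and $\chi'$ vanish by \eqref{c21}, so $u_\chi$ and $\nabla_{x,t} u_\chi$ are identically zero there. On the time caps, \eqref{d6} gives $\varphi \leq \tilde{d}_\ell$, whence
\[
\|e^{s\varphi}\nabla_{x,t}^j u_\chi\|_{0,\Omega_L\times\{\pm T\}}^2 \leq e^{2s\tilde{d}_\ell}\|\nabla_{x,t}^j u_\chi\|_{0,\Omega_L\times\{\pm T\}}^2,
\]
and a standard trace inequality, combined with the equation \eqref{c18} to express the surviving $\partial_t^2 u$ trace in terms of lower-order quantities, should produce a bound of the form $C s^3 e^{2s\tilde{d}_\ell}\|u\|_{1,Q_L}^2$. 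Only the true lateral contribution from $\Sigma_L$ then survives, with its natural $\nabla_{x,t} u_\chi$ weighting.

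Finally I would return to the left-hand side of \eqref{c16} applied to $u_\chi$ and split the integration domain into the region where $\chi \equiv 1$, on which $u_\chi = u$ so the desired Carleman density for $u$ is recovered, and the annulus $\Xi \times (-T,T)$, on which \eqref{d7} allows the missing contribution $s\sum_{j=0,1}s^{2(1-j)}\|e^{s\varphi}\nabla_{x,t}^j u\|_{0,\Xi\times(-T,T)}^2$ to be reinserted at the cost of yet another $C s^3 e^{2s\tilde{d}_\ell}\|u\|_{1,Q_L}^2$ term. Absorbing these low-order terms by choosing $s \geq s_*$ sufficiently large yields the announced estimate.

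The main obstacle is step two, namely the clean treatment of the time caps $\Omega_L \times \{\pm T\}$. Proposition~\ref{pr-ic} delivers an $H^1$-type boundary contribution, whereas the right-hand side of the corollary carries only $\|u\|_{1,Q_L}$. Bridging this gap requires both the sharp exponential smallness furnished by \eqref{d6} and a trace estimate taking advantage of the extra regularity of $u$ coming from the hyperbolic well-posedness of \eqref{c18}, or equivalently an auxiliary time cut-off near $t=\pm T$ analogous to $\chi$ in $x_n$.
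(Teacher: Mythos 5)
Your overall architecture is the right one and matches the proof the paper defers to in \cite{CLS15}: apply \eqref{c16} to the truncated function, absorb the commutator $[A,\chi]u$ (supported where $L-2\eps\leq |x_n|\leq L-\eps$) into $Ce^{2s\tilde d_\ell}\|u\|_{1,Q_L}^2$ via \eqref{d7}, kill the $x_n$-caps of $\pd Q_L$ because $\chi$ and $\chi'$ vanish there by \eqref{c21}, and finally pass from $u_\chi$ back to $u$ on the left-hand side by reinserting the annular region at the cost of another $s^3e^{2s\tilde d_\ell}\|u\|_{1,Q_L}^2$ term, again by \eqref{d7}.

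The genuine gap is exactly where you suspect it: your primary route through the time caps $\Omega_L\times\{\pm T\}$ does not close. The boundary term delivered by Proposition \ref{pr-ic} there is $s\|e^{s\varphi}\nabla_{x,t}u_\chi\|^2_{0,\Omega_L\times\{\pm T\}}+s^3\|e^{s\varphi}u_\chi\|^2_{0,\Omega_L\times\{\pm T\}}$, and while \eqref{d6} supplies the factor $e^{2s\tilde d_\ell}$, the quantity $\|\nabla_{x,t}u(\cdot,\pm T)\|_{0,\Omega_L}$ is a trace of the \emph{first} derivatives of $u$ on a codimension-one slice; controlling it requires $\|u\|_{2,Q_L}$, not the $\|u\|_{1,Q_L}$ that appears on the right-hand side of the corollary, and invoking the equation \eqref{c18} only rewrites $\pd_t^2u$, which is not the term causing trouble ($\nabla_x u(\cdot,\pm T)$ and $\pd_t u(\cdot,\pm T)$ are). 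The fix you mention only as an afterthought --- an auxiliary cut-off $\chi_0(t)$ equal to $1$ for $|t|\leq T-2\eps$ and vanishing for $|t|\geq T-\eps$ --- is not an optional variant but the necessary ingredient, and it is precisely why Proposition \ref{pr-ic} provides \eqref{d6} on the band $|t|\in[T-2\eps,T]$ in the first place: with $v=\chi_0\chi u$ all traces on the time caps vanish identically, the additional commutator $[A,\chi_0]u$ is first order and supported where \eqref{d6} bounds $\varphi$ by $\tilde d_\ell$, so it too is absorbed into $Ce^{2s\tilde d_\ell}\|u\|_{1,Q_L}^2$, and the recovery of $u$ from $\chi_0\chi u$ on the left-hand side then uses \eqref{d6} and \eqref{d7} together. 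With that substitution for your second step, the argument is complete and coincides with the cited one.
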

For the proof see \cite{CLS15}.

\subsection{Inverse problem}
\label{sec-lip}

In this subsection we introduce the linearized inverse problem associated with \eqref{S1} and relate the first Sobolev norm of the conductivity to some suitable initial condition of this boundary problem.

Namely, given $\tilde{c}_i \in \Lambda_{\mathcal{O}_*}$ for $i=1,2$, we note $u_i$ the solution to \eqref{S1} where $\tilde{c}_i$ is substituted for $\tilde{c}$, suitably extended to $(-T,0)$ in accordance with \eqref{c-sym}. Thus, putting
\bel{s0}
c:=\tilde{c}_1-\tilde{c}_2\ \mbox{and}\ f_c:=\nabla \cdot ( c \nabla u_2 ),
\ee
it is clear from \eqref{S1} that the function $u:=u_1-u_2$ is solution to the linearized system
\bel{s1}
\left\{  \begin{array}{ll} \pd_t^2 u - \nabla \cdot ( \tilde{c}_1 \nabla u )  =  f_c & \mbox{in}\ Q, \\
  u  =  0 & \mbox{on}\ \Sigma, \\
u(\cdot,0) = \pd_t u(\cdot,0)  =  0 & \mbox{in}\ \Omega.
\end{array} \right.
\ee
Note that $c$ is time independent but $\tilde{c}_1$ is time dependent and this time dependence of a coefficient in the system \eqref{s1} implies to rewrite carefully the method used in \cite{CLS15}.

By differentiating $k$-times \eqref{s1} with respect to $t$, for $k \in \N^*$ fixed, we see that $u^{(k)}:=\pd_t^k u$ is solution to

\bel{s2}
\left\{  \begin{array}{l}
  \begin{split}
 \pd_t^2 u^{(k)} &- \nabla \cdot ( \tilde{c}_1 \nabla u^{(k)} ) +R_k(\pd_t  u^{(1)}, \pd_t  u^{(2)}, \cdots, \pd_t  u^{(k)},  \nabla u^{(0)},  \nabla u^{(1)}, \cdots,  \nabla u^{(k-1)})  \\
 &=  P_k(f_c^{(0)}, f_c^{(1)}, \cdots, f_c^{(k)}) \;\; \mbox{in}\ Q, \\
 u^{(k)}  &= 0  \;\; \mbox{on}\ \Sigma, \\
 \end{split}
\end{array} \right.
\ee
with $f_c^{(j)}:=\pd_t^j f_c=\nabla \cdot ( c \nabla u_2^{(j)} )$, $u_2^{(j)}=\partial_t^ju_2, j=0, 1, 2, ..., k, $ where  $R_k$ and $P_k$ stand for zero order operators with coefficients in $L^{\infty}(Q)$. 
The result (\ref{s2}) can be proved by the method of induction. 

In this part, a lot additive terms appear in the source term due to the time dependence of the conductivity $\tilde{c}_i$ and we have to manage them now very precisely.
We keep  the notations of Corollary \ref{cor-ec}. In particular, for any function $v$, we denote $\chi(x_n) \cdot v$ by $v_{\chi}$, where $\chi(x_n)$ is defined in \eqref{c21}.
Upon multiplying both sides of the identity \eqref{s2} by $\chi(x_n)$, we obtain that

\bel{s4}
\left\{  \begin{array}{l}
  \begin{split}
 \pd_t^2 u_{\chi}^{(k)} &- \nabla \cdot ( \tilde{c}_1 \nabla u_{\chi}^{(k)} ) +R_k(\pd_t  u_{\chi}^{(1)}, \pd_t  u_{\chi}^{(2)}, \cdots, \pd_t  u_{\chi}^{(k)},  \nabla u_{\chi}^{(0)},  \nabla u_{\chi}^{(1)}, \cdots,  \nabla u_{\chi}^{(k-1)})  \\
 &=  P_k(f_{c_{\chi}}^{(0)}, f_{c_{\chi}}^{(1)}, \cdots, f_{c_{\chi}}^{(k)}) - g_k \;\; \mbox{in}\ Q, \\
 u^{(k)}  &= 0  \;\; \mbox{on}\ \Sigma, \\
\end{split}
  \end{array} \right.
\ee
with
\bel{s4b}
f_{c_\chi}^{(j)} := \nabla \cdot (c_\chi \nabla u_2^{(j)}), \qquad j=0, 1, 2, ..., k,
\ee
and $g_k$ is supported in
$\tilde{Q}_{\epsilon}:= \{ x=(x',x_n, t),\ x' \in \omega, \ \abs{x_n} \in (L-2 \eps, L-\eps), \mbox{ and } t\in(-T,T)\}$.

Having said that we may now upper bound, up to suitable additive and multiplicative constants, the $e^{s \varphi(\cdot,0)}$-weighted first Sobolev norm of the conductivity $c_\chi$ in $\Omega_L$, by the corresponding norm of the initial condition $u_\chi^{(2)}(\cdot,0)$.

\begin{lemma}
\label{lm1}
Let $u$ be the solution to the linearized problem \eqref{s1} and let $\chi$ be defined by \eqref{c21}. Then there exist two constants $s_*>0$
and $C>0$, depending only on $\omega$, $\varepsilon$ and the constant $M_0$ defined by \eqref{S100}, such that the estimate
$$ \sum_{j=0,1} \| e^{s \varphi(\cdot,0)} \nabla^j c_{\chi} \|_{0,\Omega_L}^2 \leq
C s^{-1} \left( \sum_{j=0,1} \| e^{s \varphi(\cdot,0)} \nabla^j u_{\chi}^{(2)}(\cdot,0) \|_{0,\Omega_L}^2  + e^{2 s \tilde{d}_\ell} \right),
$$
holds for all $s \geq s_*$.
\end{lemma}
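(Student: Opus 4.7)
The plan is to use the initial data to convert the lemma into an algebraic inversion of the first-order operator $c\mapsto\nabla\cdot(c\,\nabla\theta_0)$ evaluated at $t=0$, gaining the decisive $s^{-1}$ factor from a weighted integration by parts against $e^{2s\varphi(\cdot,0)}$. First, from $u(\cdot,0)=\pd_t u(\cdot,0)=0$ in \eqref{s1} we get $\nabla u(\cdot,0)=0$, and since $\theta_1=0$ gives $u_2(\cdot,0)=\theta_0$, evaluating \eqref{s1} at $t=0$ yields $u^{(2)}(\cdot,0)=\nabla\cdot(c\,\nabla\theta_0)=c\,\Delta\theta_0+\nabla c\cdot\nabla\theta_0$ in $\Omega$. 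Multiplying by $\chi(x_n)$ and collecting commutators then gives the key identity
\begin{equation*}
u_\chi^{(2)}(\cdot,0) \;=\; c_\chi\,\Delta\theta_0 + \nabla c_\chi\cdot\nabla\theta_0 - c\,\chi'(x_n)\,\pd_n\theta_0 \qquad \text{in }\Omega_L,
\end{equation*}
whose $\chi'$-remainder is supported in the shell $\{L-2\eps\leq|x_n|\leq L-\eps\}$, where $\varphi(\cdot,0)\leq\tilde d_\ell$ by \eqref{d7}. Moreover, because $\tilde c_1=\tilde c_2=c_*$ on $(\mathcal{O}_*\cap\Omega)\times(0,T)$, the difference $c$ vanishes on a neighborhood of $\Gamma$, which together with $\chi(\pm L)=\chi'(\pm L)=0$ forces $c_\chi$ and $\nabla c_\chi$ to vanish on $\pd\Omega_L$.

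\noindent\textbf{Weighted energy estimate for $c_\chi$.} The next step is to multiply the identity above by $c_\chi\,e^{2s\varphi(\cdot,0)}$ and integrate over $\Omega_L$. Rewriting $\nabla c_\chi\cdot\nabla\theta_0\,c_\chi=\tfrac12\nabla(c_\chi^2)\cdot\nabla\theta_0$ and integrating by parts (all boundary terms vanish) produces an identity of the form
\begin{equation*}
s\!\int_{\Omega_L}\! c_\chi^2\,(\nabla\varphi(\cdot,0)\!\cdot\!\nabla\theta_0)\,e^{2s\varphi(\cdot,0)} dx = \tfrac12\!\int_{\Omega_L}\! c_\chi^2\Delta\theta_0\, e^{2s\varphi(\cdot,0)} - \!\int_{\Omega_L}\! u_\chi^{(2)}(\cdot,0)\,c_\chi\,e^{2s\varphi(\cdot,0)} + \mathcal{E},
\end{equation*}
where the $\chi'$-remainder $\mathcal{E}$ satisfies $|\mathcal{E}|\leq Ce^{2s\tilde d_\ell}$ thanks to \eqref{d7} and the uniform bound $\|c\|_\infty\leq 2c_M$. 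The crucial point is that, from the explicit formula $\nabla_x\varphi(\cdot,0)=2\gamma\varphi(\cdot,0)(x'-\delta a',-x_n)$ and hypothesis \eqref{S10}, one has $\nabla\varphi(\cdot,0)\cdot\nabla\theta_0\geq c_\flat\,\varphi(\cdot,0)$ on $\overline\omega\times[-L,L]$ for some $c_\flat>0$, provided $\delta$ is taken large enough to dominate the uncontrolled contributions $x'\cdot\nabla_{x'}\theta_0 - x_n\pd_n\theta_0$\,---\,an admissible choice since Proposition \ref{pr-ic} only requires $\delta\geq\delta_0$. A Cauchy--Schwarz with weight $s$ on the $u_\chi^{(2)}c_\chi$-term and absorption of the $\Delta\theta_0$-term for $s$ large will then yield
\begin{equation*}
s\,\|e^{s\varphi(\cdot,0)} c_\chi\|_{0,\Omega_L}^2 \;\leq\; C\,\|e^{s\varphi(\cdot,0)} u_\chi^{(2)}(\cdot,0)\|_{0,\Omega_L}^2 + C\,e^{2s\tilde d_\ell}.
\end{equation*}

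\noindent\textbf{Estimate for $\nabla c_\chi$ and main obstacle.} For $\ell=1,\ldots,n$, applying $\pd_\ell$ to the key identity produces a first-order equation for $\pd_\ell c_\chi$ with additional lower-order terms involving $c_\chi$, $\nabla c_\chi$, and derivatives of $\theta_0$. Repeating verbatim the previous manipulations on each such equation (multiplication by $\pd_\ell c_\chi\,e^{2s\varphi(\cdot,0)}$, integration by parts using $\nabla c_\chi=0$ on $\pd\Omega_L$, exploitation of the same positivity of $\nabla\varphi(\cdot,0)\cdot\nabla\theta_0$), summing over $\ell$, and invoking the previous estimate to absorb the $c_\chi$-remainder will give
\begin{equation*}
s\,\|e^{s\varphi(\cdot,0)}\nabla c_\chi\|_{0,\Omega_L}^2 \;\leq\; C\,\|e^{s\varphi(\cdot,0)}\nabla u_\chi^{(2)}(\cdot,0)\|_{0,\Omega_L}^2 + C\,\|e^{s\varphi(\cdot,0)} c_\chi\|_{0,\Omega_L}^2 + C\,e^{2s\tilde d_\ell};
\end{equation*}
combining with the previous bound and dividing by $s$ then produces the inequality of the lemma. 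The main delicacy is the strict positivity of $\nabla\varphi(\cdot,0)\cdot\nabla\theta_0$: the directional ellipticity \eqref{S10} controls only the $a'$-component of $\nabla_{x'}\theta_0$, so $\delta$ must be chosen simultaneously large enough to make this positivity uniform on $\overline\omega\times[-L,L]$ and compatible with the smallness of $\tilde d_\ell$ required by \eqref{d6}--\eqref{d7} for the cutoff error $e^{2s\tilde d_\ell}$ to remain harmless when this lemma is combined with Corollary \ref{cor-ec}.
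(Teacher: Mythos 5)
Your overall architecture is the right one and is essentially the route the paper takes (it defers to \cite{CLS15}): evaluate the linearized equation at $t=0$ to get $u^{(2)}(\cdot,0)=\nabla\cdot(c\,\nabla\theta_0)$, commute with the cutoff $\chi$ and dump the $\chi'$-terms into $e^{2s\tilde d_\ell}$ via \eqref{d7}, then invert the first-order operator $c\mapsto\nabla\theta_0\cdot\nabla c+(\Delta\theta_0)c$ by a weighted integration by parts that produces the factor $s\,\nabla\theta_0\cdot\nabla\varphi(\cdot,0)$, and finally differentiate the identity to treat $\nabla c_\chi$. The support observations ($c\equiv 0$ near $\Gamma$ because $\tilde c_1=\tilde c_2=c_*$ there, so \eqref{S10} is only ever needed on $\omega_*$) are also correct and necessary.

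The gap is at the step you yourself flag as the main delicacy, and your proposed fix does not close it. You need $\nabla\theta_0\cdot\nabla\varphi(\cdot,0)=2\gamma\varphi(\cdot,0)\bigl[(x'-\delta a')\cdot\nabla_{x'}\theta_0-x_n\pd_n\theta_0\bigr]\geq c_\flat>0$ on $\supp c_\chi\cap\{\varphi(\cdot,0)>\tilde d_\ell\}$, and that set contains points with $|x_n|$ up to roughly $L$. Hypothesis \eqref{S10} gives $-\delta a'\cdot\nabla_{x'}\theta_0\geq\delta\eta_0 e^{-(1+x_n^2)}$, a quantity that \emph{decays Gaussianly in $x_n$}, whereas the uncontrolled remainder $x'\cdot\nabla_{x'}\theta_0-x_n\pd_n\theta_0$ is only bounded by $(|\omega|+|x_n|)M_0$ under \eqref{S100}, which does \emph{not} decay. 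So positivity on $\omega_*\times[-L,L]$ requires $\delta\eta_0 e^{-(1+L^2)}>(|\omega|+L)M_0$; but $L$ is tied to $\delta$ through $L\in(g_\ell(\delta),g_\ell(\delta)+\vartheta)$ with $g_\ell(\delta)$ growing like $\sqrt{\delta}$, so enlarging $\delta$ makes the required inequality \emph{harder}, not easier (and since $\eta_0\leq e\,M_0$ automatically, the inequality fails for every admissible $\theta_0$ once $\delta$ is large). Taking ``$\delta$ large enough'' is therefore self-defeating, and the region where positivity fails is not covered by the $e^{2s\tilde d_\ell}$ error term either, since $\varphi(\cdot,0)>\tilde d_\ell$ persists there. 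To repair the argument you must either exploit additional decay of the full gradient $\nabla\theta_0$ in $x_n$ (so that the bad terms are dominated by $\delta\eta_0e^{-(1+x_n^2)}$ uniformly on $[-L,L]$), or verify the sign condition $\nabla\theta_0\cdot\nabla\varphi(\cdot,0)\geq c_\flat>0$ by a genuinely different mechanism; as written, the central inequality of your weighted energy estimate is unproved.
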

For the proof see \cite{CLS15}.

Now, we end the proof of theorem \ref{T.1}. 
We  first give an upper bound $u_\chi^{(2)}(\cdot,0)$ in the $e^{s \varphi(\cdot,0)}$-weighted $H^1(\Omega_L)$-norm topology, by the corresponding norms of $u_\chi^{(2)}$ and $u_\chi^{(3)}$ in $Q_L$.
\begin{lemma}
\label{lm2}
There exists a constant $s_*>0$ depending only on $T$  such that we have
$$ \| z(\cdot,0) \|_{0,\Omega_L}^2 \leq 2 \left( s  \| z \|_{0,Q_L}^2 + s^{-1} \| \pd_t z \|_{0,Q_L}^2 \right), $$
for all $s \geq s_*$ and $z \in H^1(-T,T;L^2(\Omega_L))$.
\end{lemma}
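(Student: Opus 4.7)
The plan is to derive the inequality from a Fubini-type argument combined with Young's inequality, exploiting the fact that the time interval $(-T,T)$ contains $0$ in its interior.

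First I would use the fundamental theorem of calculus: for each $x \in \Omega_L$ and each $\tau \in (-T,T)$,
$$ |z(x,0)|^2 = |z(x,\tau)|^2 - 2\int_0^\tau z(x,t)\,\pd_t z(x,t)\,dt. $$
Averaging this identity over $\tau \in (-T,T)$ and noticing that for every $\tau$ in this interval one has $\left| \int_0^\tau z\,\pd_t z\,dt \right| \leq \int_{-T}^T |z\,\pd_t z|\,dt$, I would obtain
$$ 2T\,|z(x,0)|^2 \leq \int_{-T}^T |z(x,\tau)|^2\,d\tau + 4T \int_{-T}^T |z(x,t)\,\pd_t z(x,t)|\,dt. $$

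Next I would integrate over $x \in \Omega_L$ and apply the standard Young inequality
$ 2|z\,\pd_t z| \leq s\,|z|^2 + s^{-1}|\pd_t z|^2 $
pointwise, which gives
$$ 2T\,\|z(\cdot,0)\|_{0,\Omega_L}^2 \leq \|z\|_{0,Q_L}^2 + 2T\,s\,\|z\|_{0,Q_L}^2 + 2T\,s^{-1}\,\|\pd_t z\|_{0,Q_L}^2. $$
Dividing by $2T$ yields
$$ \|z(\cdot,0)\|_{0,\Omega_L}^2 \leq \frac{1}{2T}\,\|z\|_{0,Q_L}^2 + s\,\|z\|_{0,Q_L}^2 + s^{-1}\,\|\pd_t z\|_{0,Q_L}^2. $$
Finally, choosing $s_* := (2T)^{-1}$ so that $\frac{1}{2T} \leq s$ for every $s \geq s_*$, I would absorb the first term into $s\,\|z\|_{0,Q_L}^2$, obtaining the desired bound with constant $2$. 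The threshold $s_*$ depends only on $T$, as required.

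There is no real obstacle here: the argument is a direct trace inequality in $t$, and the only subtlety is to keep the appropriate weights $s$ and $s^{-1}$ so that the statement is compatible with its subsequent use in combination with the Carleman estimate of Corollary~\ref{cor-ec} (where the two terms $\|z\|_{0,Q_L}^2$ and $s^{-2}\|\pd_t z\|_{0,Q_L}^2$ are what will later be controlled via the weight $e^{s\varphi}$).
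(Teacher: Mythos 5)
Your proof is correct and is essentially the standard argument underlying this lemma (which the paper states without proof, deferring to \cite{CLS15}): average the fundamental theorem of calculus over $\tau\in(-T,T)$, apply Young's inequality with weights $s$ and $s^{-1}$, and absorb the leftover $(2T)^{-1}\|z\|_{0,Q_L}^2$ term for $s\geq s_*=(2T)^{-1}$. The only cosmetic remark is that for $z\in H^1(-T,T;L^2(\Omega_L))$ the identity is cleaner when written for the absolutely continuous scalar function $t\mapsto\|z(\cdot,t)\|_{0,\Omega_L}^2$ (with derivative $2\langle z,\pd_t z\rangle_{L^2(\Omega_L)}$) rather than pointwise in $x$; this changes nothing in the estimate or the constant.
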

We apply Lemma \ref{lm2} with $z = e^{s \varphi} \pd_i^j u_\chi^{(2)}$ for $i \in \N_n^*$ and $j=0,1$, getting
$$
\| e^{s \varphi(\cdot,0)} \pd_i^j u_\chi^{(2)}(\cdot,0) \|_{0,\Omega_L}^2 \leq C \left( s  \| e^{s \varphi} \pd_i^j u_\chi^{(2)} \|_{0,Q_L}^2 + s^{-1} \| e^{s \varphi} \pd_i^j u_{\chi}^{(3)} \|_{0,Q_L}^2 \right),\ s \geq s_*.
$$
Summing up the above estimate over $i \in \N_n^*$ and $j=0,1$, we obtain for all $s \geq s_*$ that
\bel{s10}
\sum_{j=0,1} \| e^{s \varphi(\cdot,0)} \nabla^j u_\chi^{(2)}(\cdot,0) \|_{0,\Omega_L}^2
\leq C \sum_{j=0,1} \left( s \| e^{s \varphi} \nabla^j u_\chi^{(2)} \|_{0,Q_L}^2 + s^{-1} \| e^{s \varphi} \nabla^j u_\chi^{(3)} \|_{0,Q_L}^2 \right).
\ee

Then we majorize the right hand side of \eqref{s10} with
\bel{hs}
\mathfrak{h}_k(s) := \sum_{j=0,1} s^{2(1-j)} \| {\rm e}^{s \varphi} \nabla_{x,t}^j u_\chi^{(k)} \|_{0,\Sigma_L}^2,\ k=0,1,2,3.
\ee
Indeed, since $u_{\chi}^{(k)}$, for $k=0,1,2,3$,  is solution to \eqref{c18} with $\tilde{c}=\tilde{c}_1$ and $f=P_k(f_{c_\chi}^{(0)},  f_{c_\chi}^{(1)}, \cdots,  f_{c_\chi}^{(k)})- g_k - R_k(\partial_tu_\chi^{(1)}, \partial_tu_\chi^{(2)}, ..., \partial_tu_\chi^{(k)}, \nabla u_\chi^{(0)}, \nabla u_\chi^{(1)}, ..., \nabla u_\chi^{(k-1)})$, according to \eqref{s4}, then Corollary \ref{cor-ec} yields
\bea \nonumber
\begin{split}
&\sum_{k=0}^3 \left( s \sum_{j=0,1} s^{2(1-j)} \| {\rm e}^{s \varphi} \nabla_{x,t}^j u_\chi^{(k)} \|_{0,Q_L}^2\right) \nonumber\\
  &\leq C\sum_{k=0}^3 \left( \| {\rm e}^{s \varphi} f_{c_\chi}^{(k)} \|_{0,Q_L}^2 + \| {\rm e}^{s \varphi} g_k \|_{0,\tilde{Q}_{\eps}}^2+  s^3{\rm e}^{2 s \tilde{d}_\ell} \| u_\chi^{(k)} \|_{1,Q_L}^2
  + s \mathfrak{h}_k(s) \right), \nonumber
  \end{split}
\eea
for $s$ large enough, because the terms coming from the operators $R_k$ are absorbed by the terms in the left hand side.\\

The time dependence of $\tilde{c}_i$ implies to manage more terms at this part of the proof than in \cite{CLS15}. More precisely, we have to deal in the right hand side with the terms $\mathfrak{h}_k(s)$ for $k=0$ and $k=1$.
In light of \eqref{s10} this entails that
\bea
& & \sum_{j=0,1} \| e^{s \varphi(\cdot,0)} \nabla^j u_\chi^{(2)}(\cdot,0) \|_{0,\Omega_L}^2 \label{s13}\\
& \leq &
C \sum_{k=0}^3 \left( \| {\rm e}^{s \varphi} f_{c_\chi}^{(k)} \|_{0,Q_L}^2 + \| {\rm e}^{s \varphi} g_k \|_{0,\tilde{Q}_{\eps}}^2 + s^3 {\rm e}^{2 s \tilde{d}_\ell} \| u_\chi^{(k)} \|_{1,Q_L}^2 + s\mathfrak{h}_k(s) \right). \nonumber 
\eea

Further, we see that the first (resp., second) term of the sum in the right hand side of
\eqref{s13} is upper bounded up to some multiplicative constant, by $\sum_{j=0,1} \| {\rm e}^{s \varphi} \nabla^j c_\chi \|_{0,Q_L}^2$ (resp., ${\rm e}^{2 s \tilde{d}_\ell} ( \| u^{(k)} \|_{1,Q_L}^2 + 1)$), as in \cite{CLS15}.
From this and Lemma \ref{lm1} then follows for $s$ sufficiently large that

\bea
& &  C \, s  \sum_{j=0,1} \| e^{s \varphi(\cdot,0)} \nabla^j c_\chi \|_{0,\Omega_L}^2  \label{3.18} \\
& \leq &  \sum_{j=0,1} \| {\rm e}^{s \varphi} \nabla^j c_\chi \|_{0,Q_L}^2 + {\rm e}^{2 s \tilde{d}_\ell} +\sum_{k=0}^3 \left( s^3 {\rm e}^{2 s \tilde{d}_\ell} \| u^{(k)} \|_{1,Q_L}^2 + s \mathfrak{h}_k(s)\right).  \nonumber
\eea

Finally, the end of the proof is similar to \cite{CLS15}, adding in the right hand side the terms in the sum for $k=0$ and $k=1$. But, these additive terms are already embedded in the norm used in our final result, and we get the stability inequality of theorem \ref{T.1}.

 \begin{figure}
 \begin{center}
   \begin{tabular}{cc}
{\includegraphics[scale=0.17,  trim = 8.0cm 6.0cm 8.0cm 6.0cm,clip=true,]{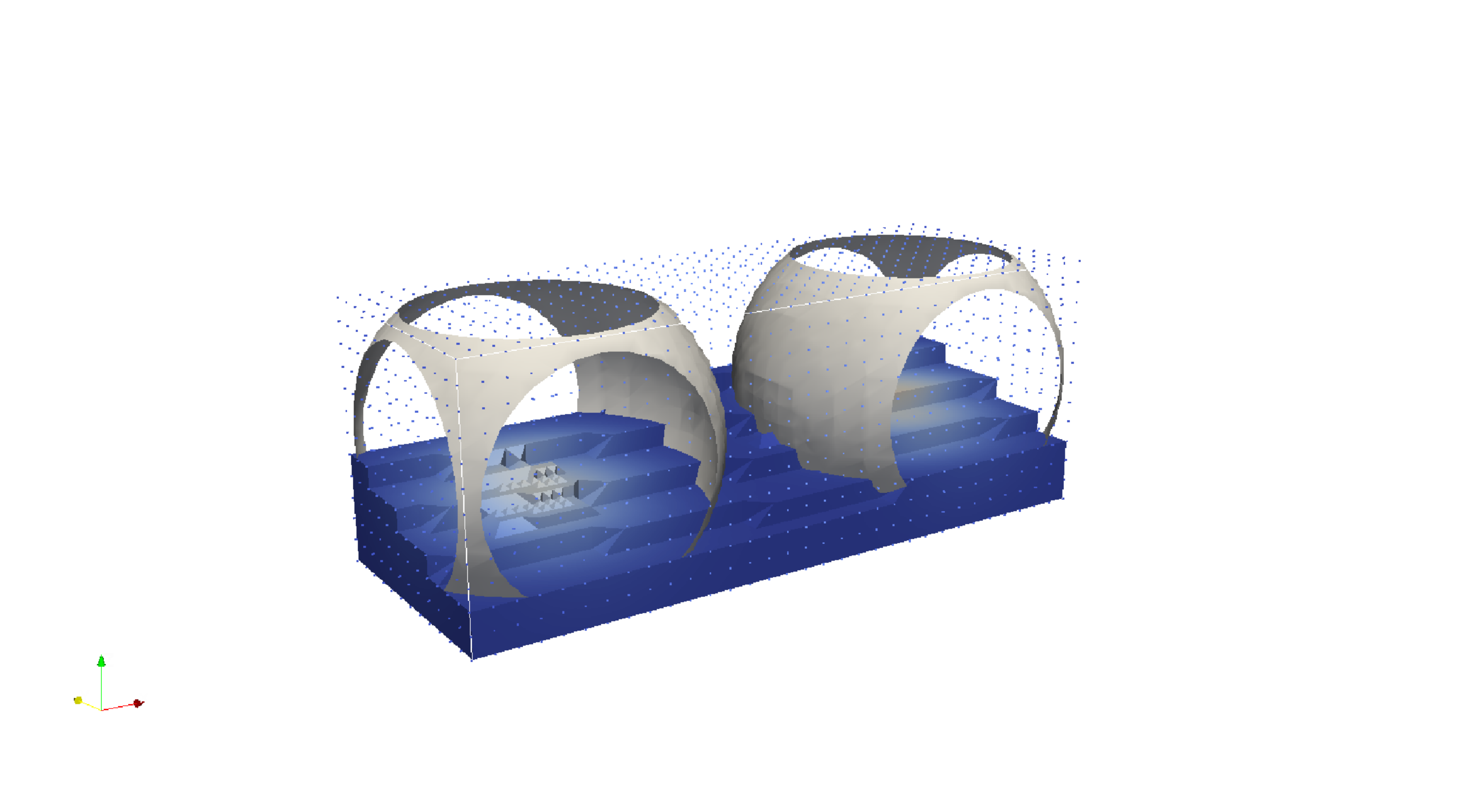}} &
  {\includegraphics[scale=0.17,  trim = 8.0cm 6.0cm 8.0cm 6.0cm, clip=true,]{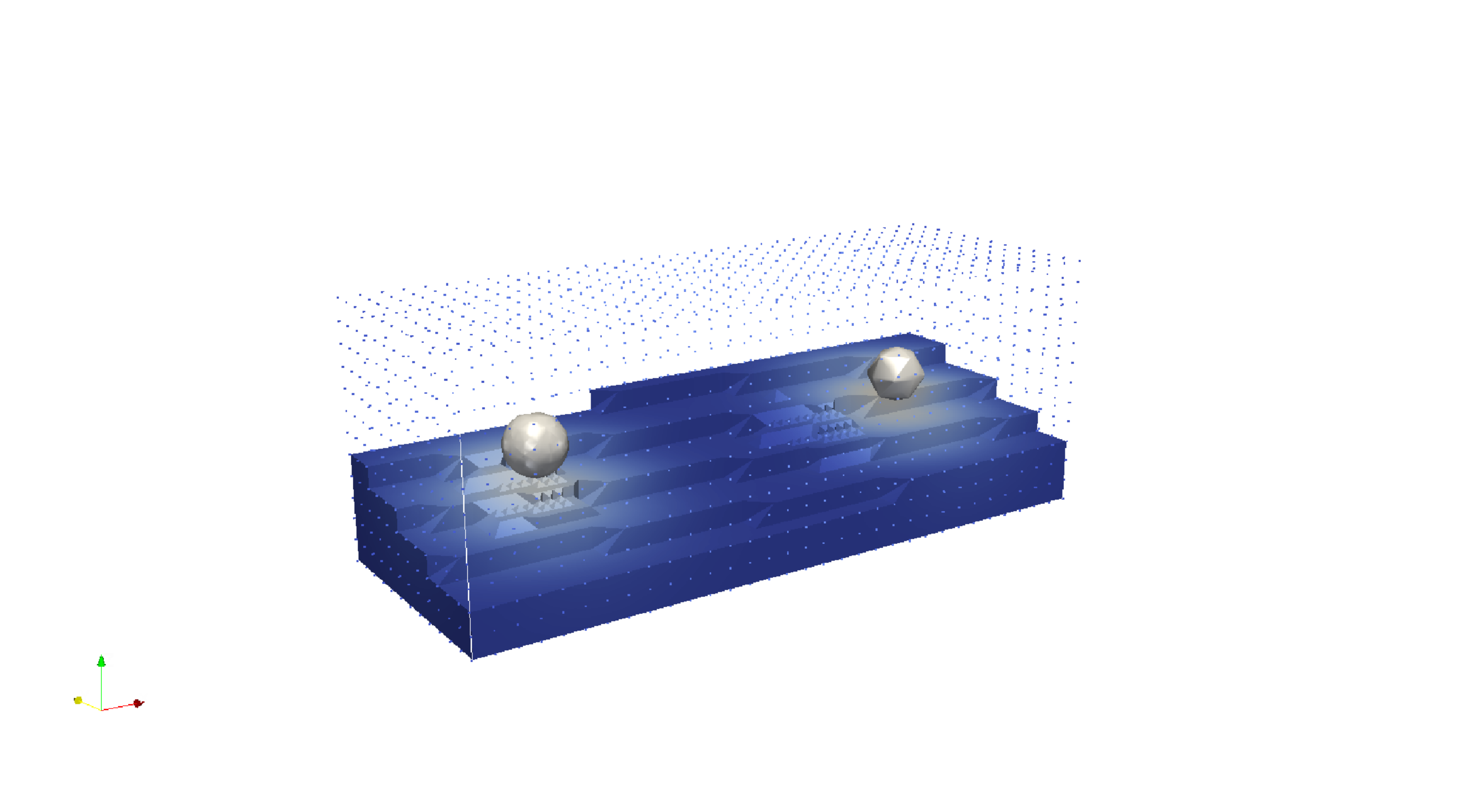}}  \\
a) $c(x)=1.2$ &  b) $c(x)=5.5$ \\
\end{tabular}
 \end{center}
 \caption{ \protect\small \emph{Slices of the exact Gaussian function $c(x)$ in
   $\Omega_{FEM}$ given by (\ref{2gaussians}).  }}
 \label{fig:exact_gaus}
 \end{figure}

 \begin{figure}
 \begin{center}
   \begin{tabular}{cc}
{\includegraphics[scale=0.2, trim = 8.0cm 6.0cm 8.0cm 6.0cm, clip=true,]{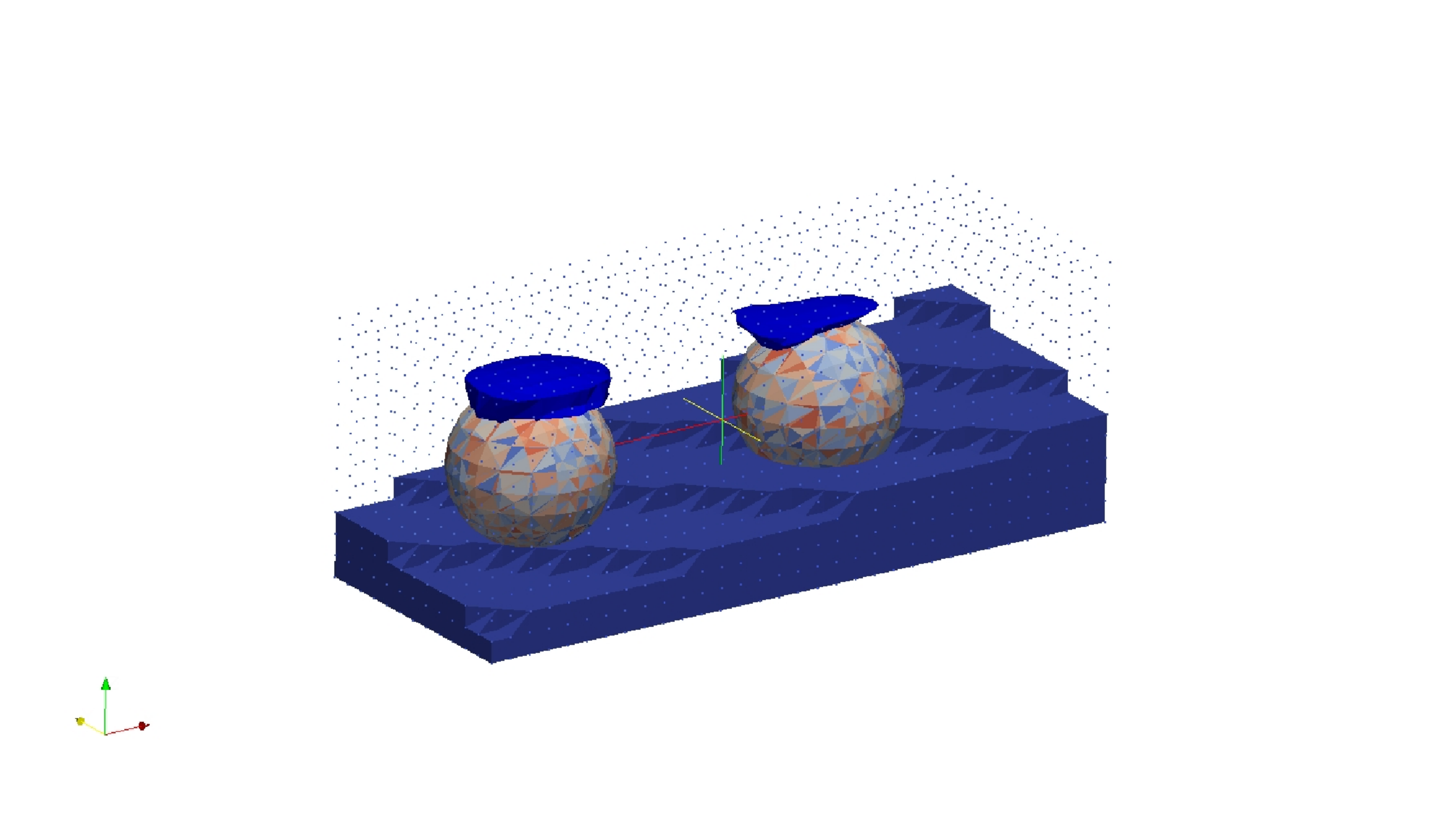}} &
  {\includegraphics[scale=0.2, trim = 8.0cm 6.0cm 8.0cm 6.0cm, clip=true,]{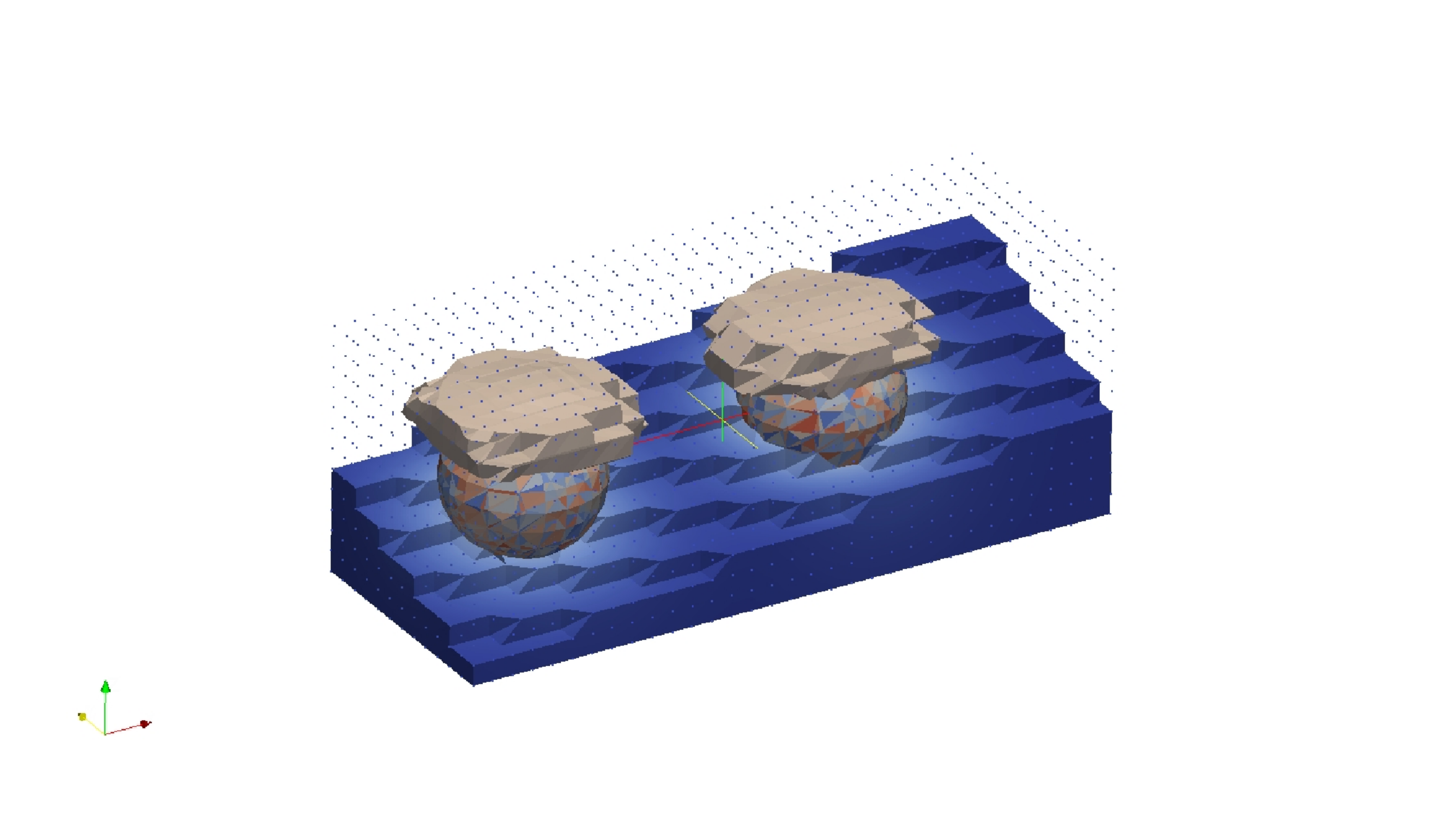}} \\
a) $\sigma=3\%$ & b)  $\sigma= 10\%$
\end{tabular}
 \end{center}
 \caption{ \protect\small \emph{Reconstructions
   obtained in Test 1 for different noise level  $\sigma$ in data. All figures are visualized for $c(x)=3.5$. }}
 \label{fig:rec_test1}
 \end{figure}

\section{Numerical Studies}
\label{sec:Numer-Simul}

In this section, we present numerical simulations of the
reconstruction of unknown function $c(x)$ of the equation (\ref{S1})
using the  domain decomposition method of \cite{BAbsorb}.

To do that we decompose $\Omega$ into two subregions $\Omega_{FEM}$
and $\Omega_{FDM}$ such that $\Omega = \Omega_{FEM} \cup
\Omega_{FDM}$, and $\Omega_{FEM} \cap \Omega_{FDM} = \emptyset$.  In
$\Omega_{FEM}$ we will use the finite element method (FEM) and in
$\Omega_{FDM}$ - the finite difference method (FDM). 
 The boundary $\partial \Omega$ of the domain $\Omega$ is such
 that $\partial \Omega =\partial _{1} \Omega \cup \partial _{2} \Omega
 \cup \partial _{3} \Omega$ where $\partial _{1} \Omega$ and $\partial
 _{2} \Omega$ are, respectively, front and back sides of $\Omega$, and
 $\partial _{3} \Omega$ is the union of left, right, top and bottom
 sides of this domain. We will collect time-dependent observations
 $S_T := \partial_1 \Omega \times (0,T)$ at the backscattering side
 $\partial_1 \Omega$ of $\Omega$.  We
 also define $S_{1,1} := \partial_1 \Omega \times (0,t_1]$, $S_{1,2}
:= \partial_1 \Omega \times (t_1,T)$, $S_2 := \partial_2 \Omega \times
(0, T)$ and $S_3 := \partial_3 \Omega \times (0, T)$.

Our model problem used in computations is  following:
\begin{equation}\label{model1}
\begin{split}
\frac{\partial^2 u}{\partial t^2}  -  \nabla \cdot ( \tilde{c} \nabla  u)   &= 0~ \mbox{in}~~ \Omega_T, \\
  u(x,0) = \theta_0(x), ~~~u_t(x,0) &= 0~ \mbox{in}~~ \Omega,     \\
\partial _{n} u& = f(t) ~\mbox{on}~ S_{1,1},
\\
\partial _{n}  u& =-\partial _{t} u ~\mbox{on}~ S_{1,2},
\\
\partial _{n} u& =-\partial _{t} u~\mbox{on}~ S_2, \\
\partial _{n} u& =0~\mbox{on}~ S_3.\\
\end{split}
\end{equation}
 In (\ref{model1}) the function $f(t)$ is the single direction of a
 plane wave which is initialized at $\partial_1 \Omega$ in time
 $T=[0,3.0]$ and is defined as
 \begin{equation}\label{f}
 \begin{split}
 f(t) =\left\{ 
 \begin{array}{ll}
 \sin \left( \omega t \right) ,\qquad &\text{ if }t\in \left( 0,\frac{2\pi }{\omega }
 \right) , \\ 
 0,&\text{ if } t>\frac{2\pi }{\omega }.
 \end{array}
 \right. 
 \end{split}
 \end{equation}
We  initialize initial condition $\theta_0(x)$ at backscattered side $\partial_1 \Omega$ as 
\begin{equation}\label{initcond}
\begin{split}
u(x,0) &= f_0(x)={\rm e}^{-(x_1^2 + x_2^2 + x_3^3)}  \cdot \cos  t|_{t=0} = {\rm e}^{-(x_1^2 + x_2^2 + x_3^3)}.
\end{split}
\end{equation}

We assume that the functions $c(x)=1$ and $c_0(x,t)=0$ inside
$\Omega_{FDM}$.  The goal of our numerical tests is to reconstruct
 a smooth
function $c(x)$ only inside $\Omega_{FEM}$ which we define as
\begin{equation}\label{2gaussians}
\begin{split}
c(x) &= 1.0 + 5.0 \cdot {\rm e}^{-((x_1 -0.5)^2/0.2 + {x_2}^2/0.2 + {x_3}^2/0.2 )} \\
&+ 5.0 \cdot {\rm e}^{-((x_1 +1)^2/0.2 + {x_2}^2/0.2 + {x_3}^2/0.2 )}.
\end{split}
\end{equation}
We also assume that the function $c_0(x,t)$ is known inside
$\Omega_{FEM}$, and we define this function  as
\begin{equation}\label{timedepfunc}
\begin{split}
  c_0(x,t) &= 0.01 ~ \cos t \cdot {\rm e}^{-({x_1}^2/0.2 + {x_2}^2/0.2 + {x_3}^2/0.2 )}.
\end{split}
\end{equation}
Figure \ref{fig:exact_gaus} presents slices of the exact function
$c(x)$ given by (\ref{2gaussians}) for $c(x)=1.2$ and $c(x)=5.5$, correspondingly, and
Figure \ref{fig:rec_test1} presents isosurfaces of the exact function
$\tilde{c}$ in the problem (\ref{model1}) for $\tilde{c}=3.5$ at
different times.  Numerical tests of \cite{BAbsorb} show that the best
reconstruction results for the space-dependent function $c(x)$ and for
$c_0=0$ in $\Omega$ are obtained for $\omega = 40$ in (\ref{f}), and
we take $\omega = 40$ in (\ref{f}) in all our tests.

 \begin{figure}
 \begin{center}
 \begin{tabular}{cc}
{\includegraphics[scale=0.23, clip=true,]{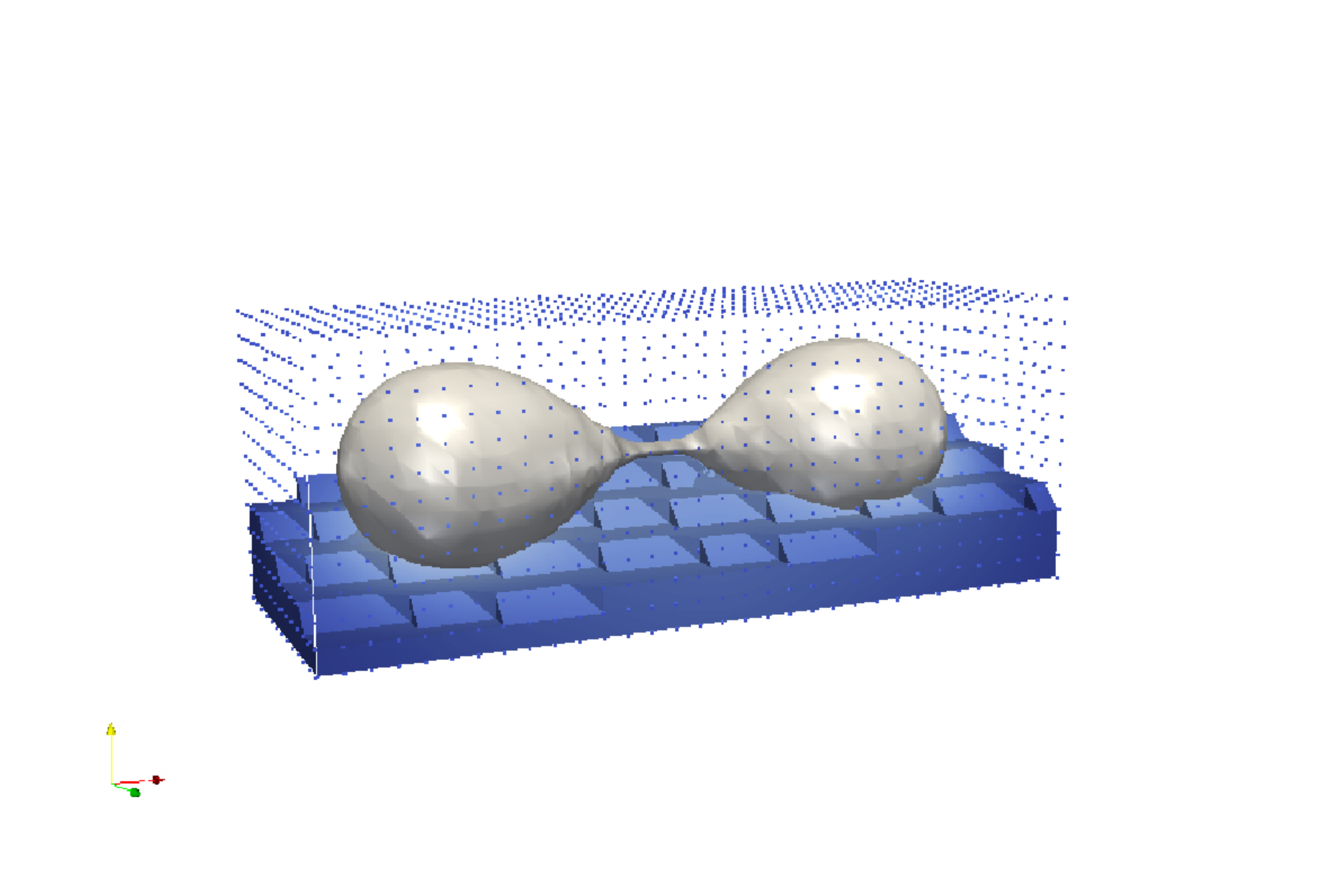}} &
  {\includegraphics[scale=0.23,  clip=true,]{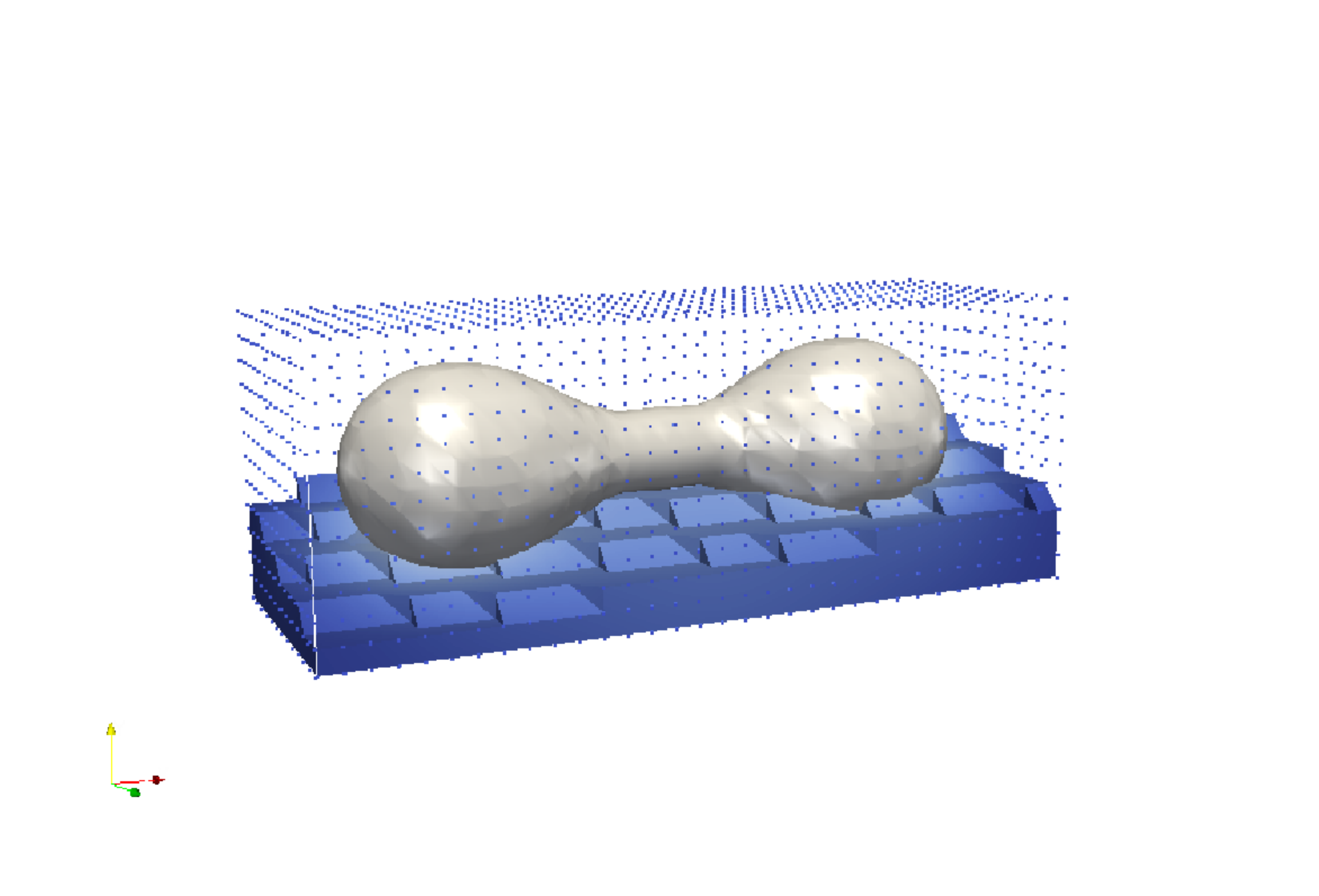}} 
 \\
 t= 1.2 & t = 1.8 \\
{\includegraphics[scale=0.23, clip=true,]{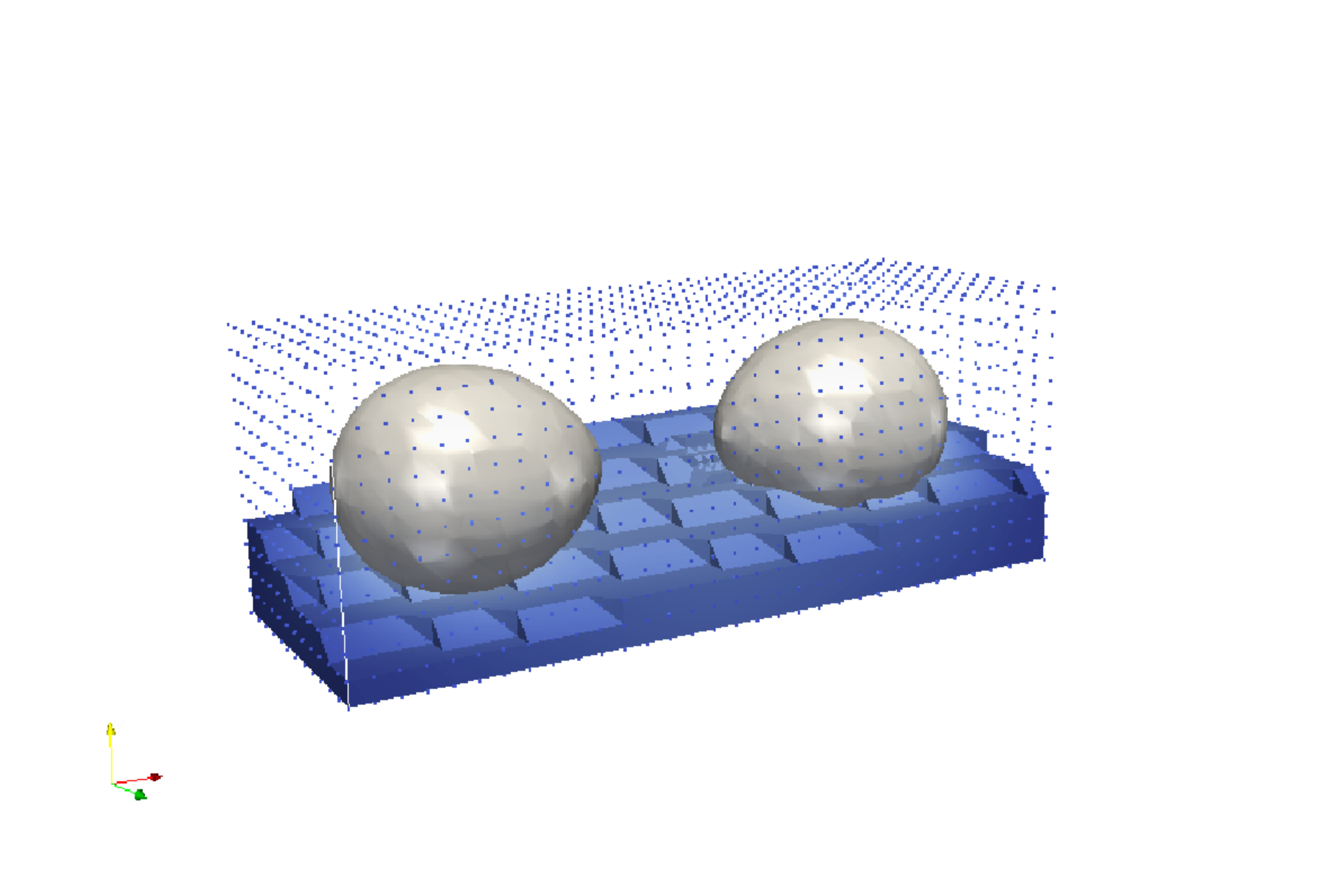}} &
  {\includegraphics[scale=0.23,  clip=true,]{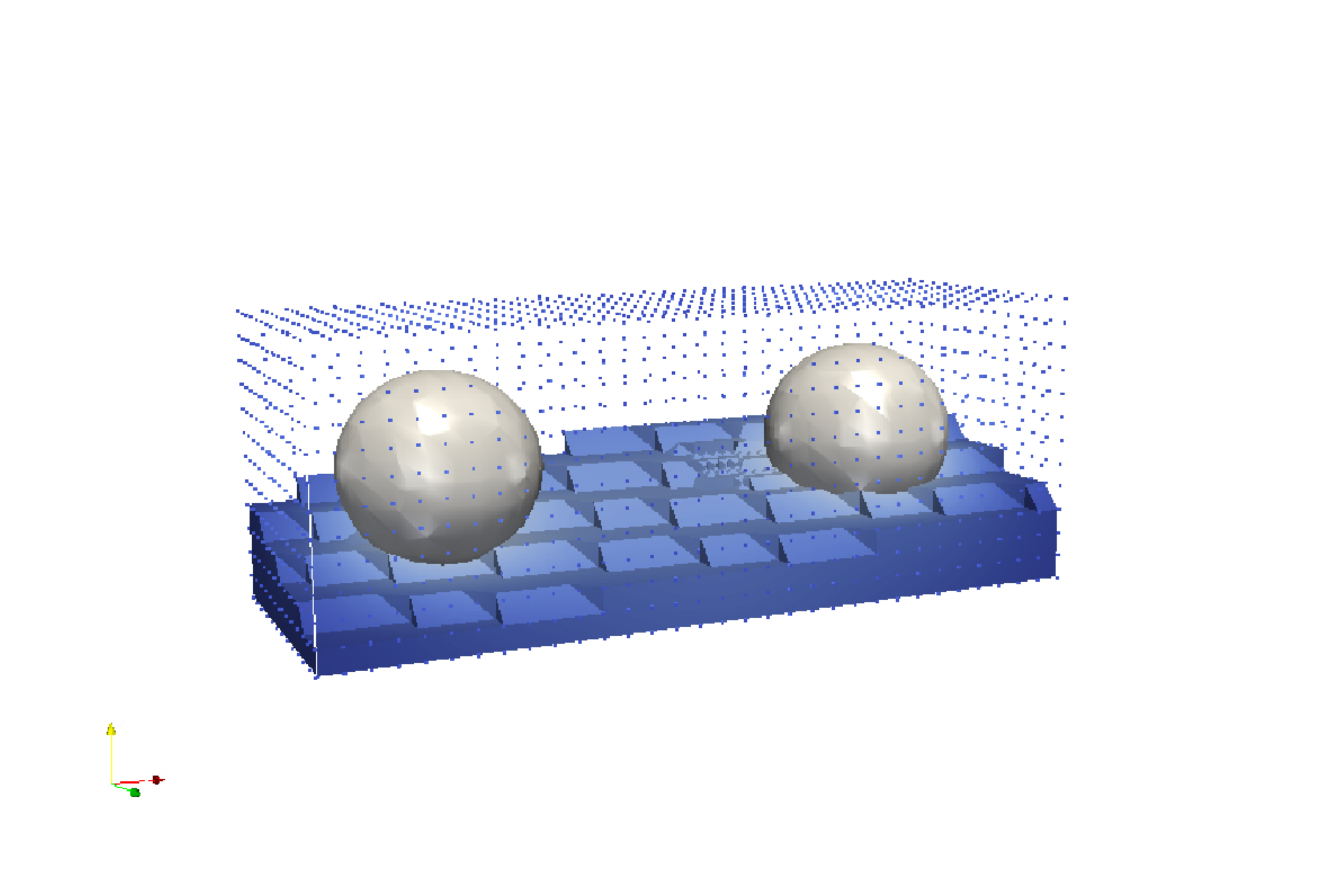}} 
 \\
t=2.4 & t = 2.7 \\
\end{tabular}
 \end{center}
 \caption{ \protect\small \emph{ Slices of the exact space and
     time-dependent function $\tilde{c}$ for $\tilde{c}=3.5$ at
     different times.} }
 \label{fig:exact_functime_b}
 \end{figure}

We introduce dimensionless spatial
 variables $x^{\prime}= x/\left(1m\right)$ 
and define $\Omega_{FEM}$  and $\Omega_{FDM}$ as 
 the following dimensionless computational domains:
 \begin{equation*}
 \Omega_{FEM} = \left\{ x= (x_1,x_2,x_3) \in (
 -1.6,1.6) \times (-0.6,0.6) \times (-0.6,0.6) \right\},
 \end{equation*}
 \begin{equation*}
 \Omega = \left\{ x= (x_1,x_2,x_3) \in (
 -1.8,1.8) \times (-0.8,0.8) \times (-0.8,0.8) \right\} .
 \end{equation*}
  We choose the mesh size $h=0.1$ in the overlapping layers between
  $\Omega_{FEM}$ and $\Omega_{FDM}$ as well as in the computations of
  the inverse problem. However, we have generated our backscattered
  data using the several times locally refined mesh $\Omega_{FEM}$,
  and in a such way we avoid problem with variational crime.  To
  generate backscattered data we solve the model problem
  (\ref{model1}) in time $T=[0,3.0]$ with the time step $\tau=0.003$
  which satisfies to the CFL condition \cite{CFL67}, and supply
  simulated backscattered data by additive noise $\sigma=3\%, 10\%$ at
  $\partial_1 \Omega$. Similarly with \cite{BAbsorb, BNAbsorb} in all
  our computations we choose constant regularization parameter $\gamma
  =0.01$ because it gives smallest relative error in the
  reconstruction of the function $c(x)$.  Different techniques for the
  computation of a regularization parameter are presented in works
  \cite{BKS,Engl,tikhonov}, and checking of performance of these
  techniques for the solution of our inverse problem can be challenge
  for our future research.

 We
 also assume that the reconstructed function $c(x)$ belongs to the set
 of admissible parameters
 \begin{equation}\label{admpar}
 \begin{split}
  M_{c} \in \{c\in C(\overline{\Omega })|1\leq c(x)\leq 10\}.\\
 \end{split}
 \end{equation}
To get final images of our reconstructed function $c(x)$ we use a
post-processing procedure which is the same as in \cite{BAbsorb,
  BNAbsorb}.

\begin{table}[tbp] 
{\footnotesize Table 1. \emph{Computational results of the
    reconstructions  together with computational errors
    in the maximal contrast of $c(x)$ in percents. Here, $\overline{N}$ is the
    final number of iteration in the conjugate gradient method.}}  \par
\vspace{2mm}
\centerline{
\begin{tabular}{|c|c|}
 \hline
   $\sigma=3\%$ &  $\sigma = 10\%$ 
 \\
 \hline
\begin{tabular}{l|l|l|l} \hline
Case & $\max_{\Omega_{FEM}} c_{\overline{N}}$ &  error, \% & $\overline{N}$  \\ \hline
Test 1 & 6.66 & 11 & 13   \\
Test 2 & 4.1  & 32  & 9    \\
\end{tabular}
 & 
\begin{tabular}{l|l|l|l} \hline
Case & $\max_{\Omega_{FEM}} c_{\overline{N}}$ &    error, \% & $\overline{N}$  \\ \hline
Test 1 & 8.11 & 35  & 15   \\
Test 2 & 5.58 &   7  & 10   \\
\end{tabular} 
\\
\hline
\end{tabular}}
\end{table}

 \begin{figure}
 \begin{center}
 \begin{tabular}{cc}
 {\includegraphics[scale=0.2, trim = 8.0cm 6.0cm 8.0cm 6.0cm,
     clip=true,]{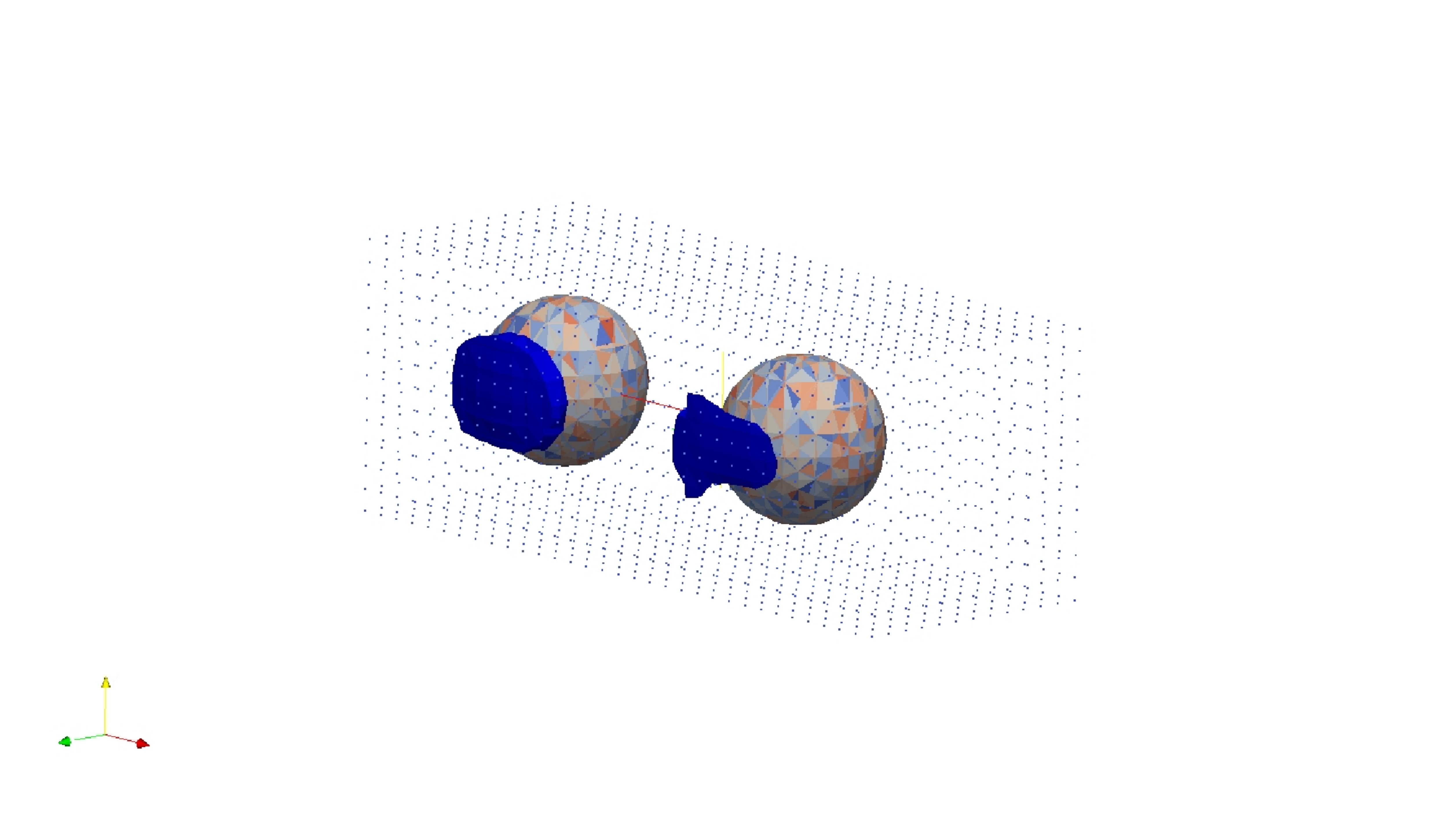}} &
 {\includegraphics[scale=0.2, trim = 8.0cm 6.0cm 8.0cm 6.0cm,
     clip=true,]{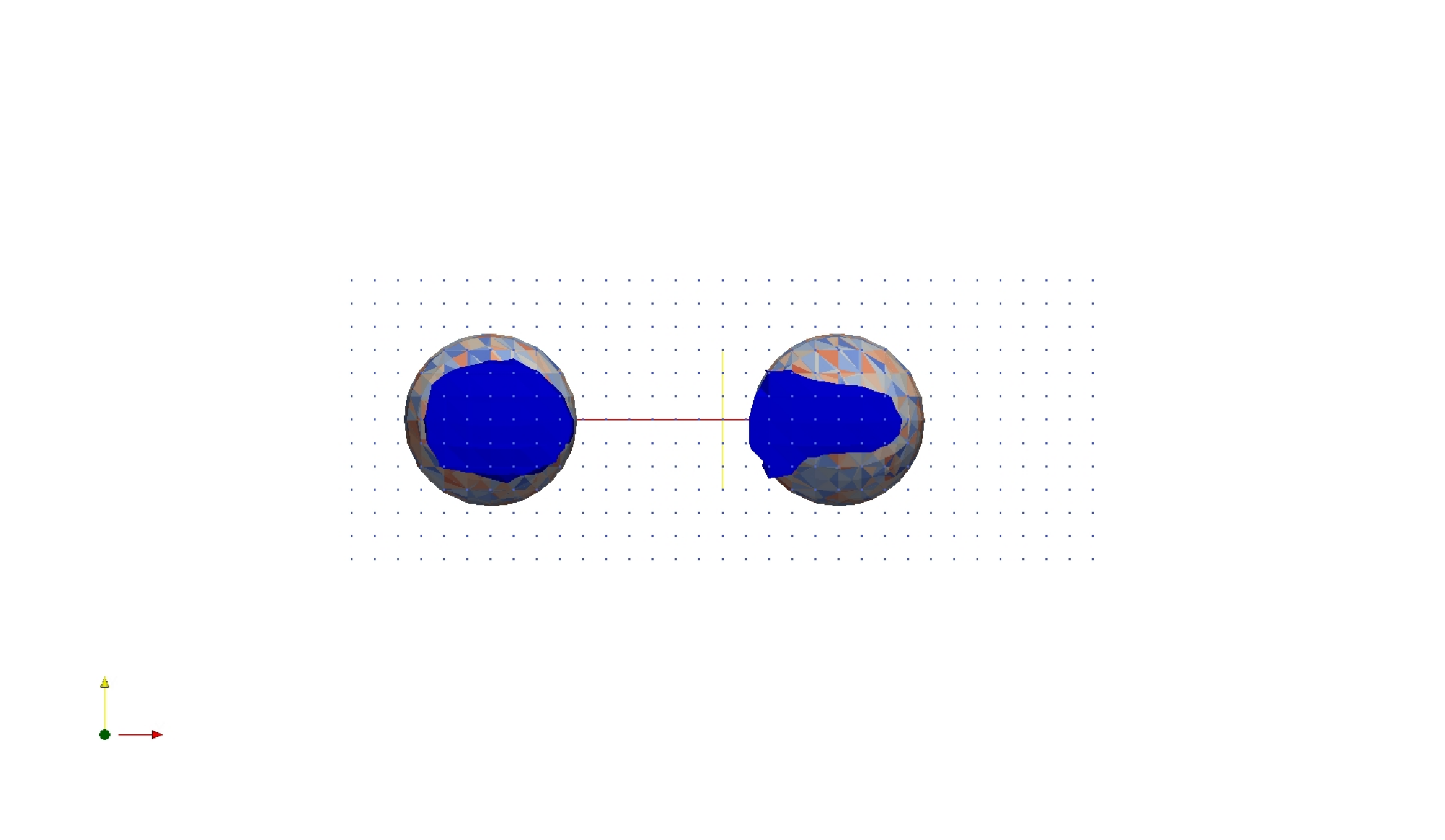}} \\
 prospect view & $x_1 x_2$ view \\
 {\includegraphics[scale=0.2, trim =  8.0cm 6.0cm 8.0cm 6.0cm,
     clip=true,]{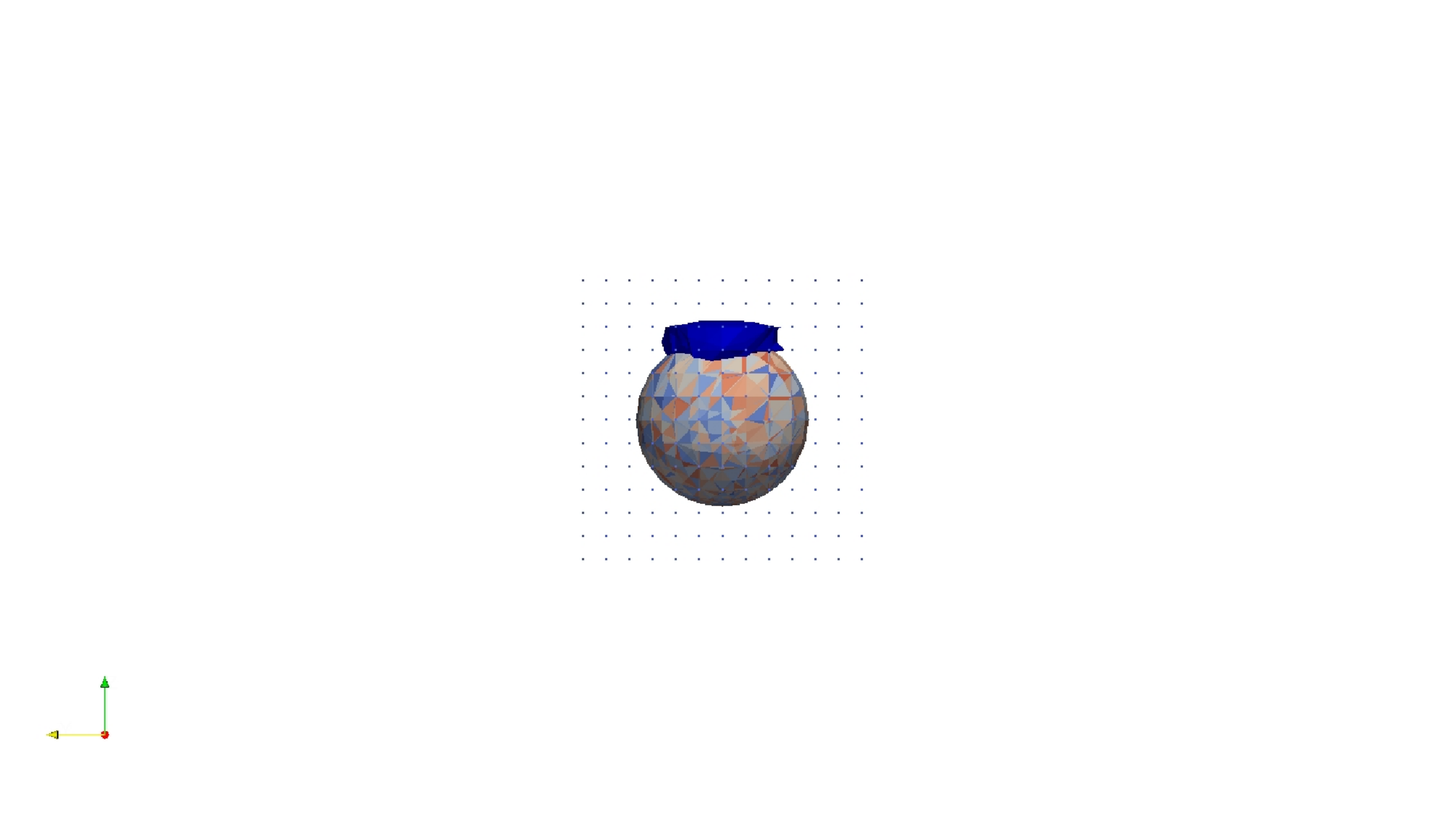}} &
 {\includegraphics[scale=0.2, trim = 8.0cm 6.0cm 8.0cm 6.0cm,
     clip=true,]{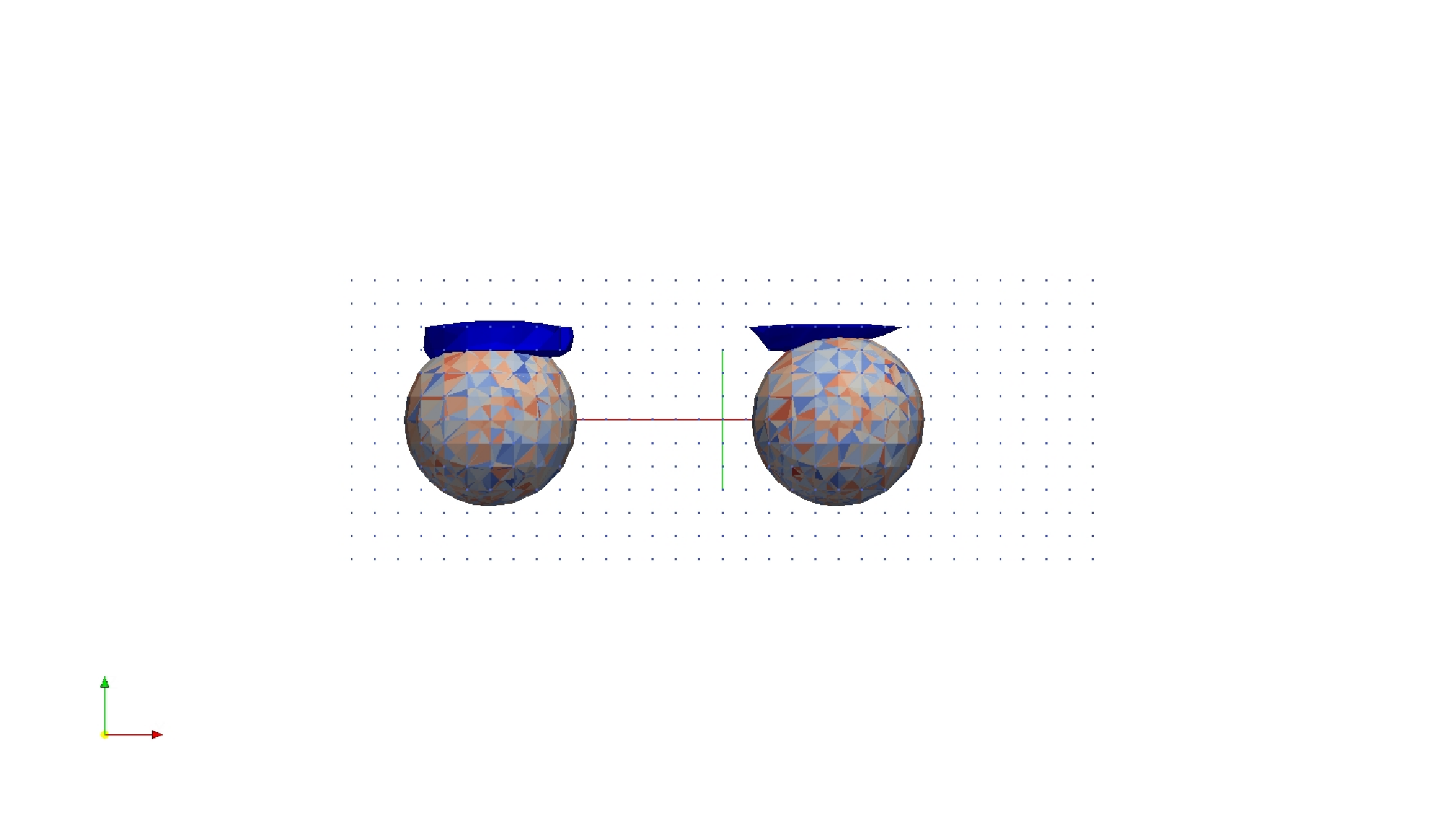}} \\ $x_2 x_3$ view & $x_3 x_1$ view
 \\
\end{tabular}
 \end{center}
 \caption{ \protect\small \emph{  Test 1. Reconstruction of $c(x)$  with $\max_{\Omega_{FEM}} c(x) = 6.66 $ for
   $\omega=40$ in (\ref{f}) with noise level $\sigma=3\%$. The  spherical wireframe of the isosurface
     with exact value of the function (\ref{2gaussians}),
  which corresponds to the value of the reconstructed $c=
     0.7\max_{\Omega_{FEM}} c(x)$.} }
 \label{fig:test1noise3}
 \end{figure}

 \begin{figure}
 \begin{center}
 \begin{tabular}{cc}
 {\includegraphics[scale=0.2, trim = 8.0cm 6.0cm 8.0cm 6.0cm,
     clip=true,]{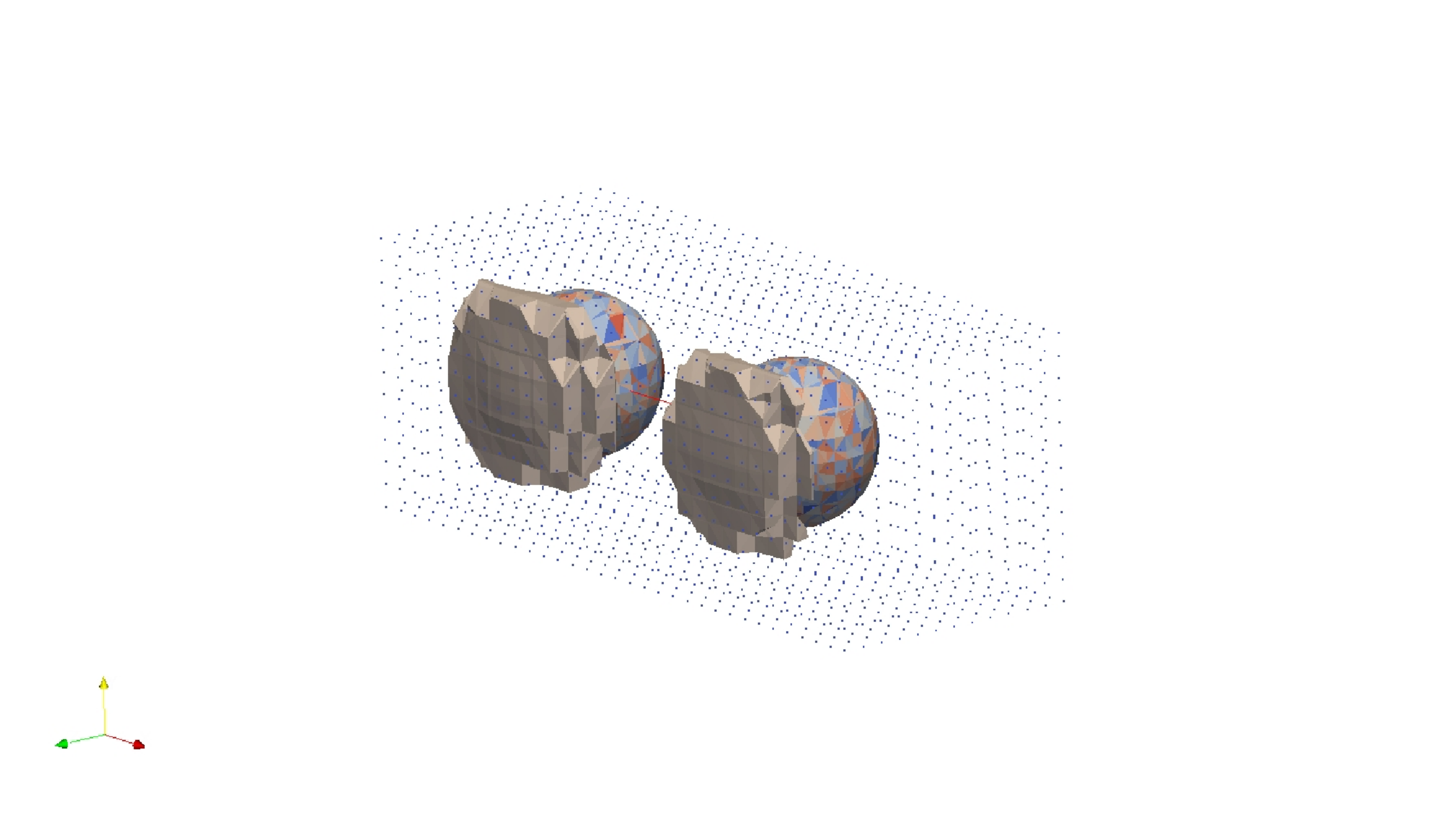}} &
 {\includegraphics[scale=0.2, trim = 8.0cm 6.0cm 8.0cm 6.0cm,
     clip=true,]{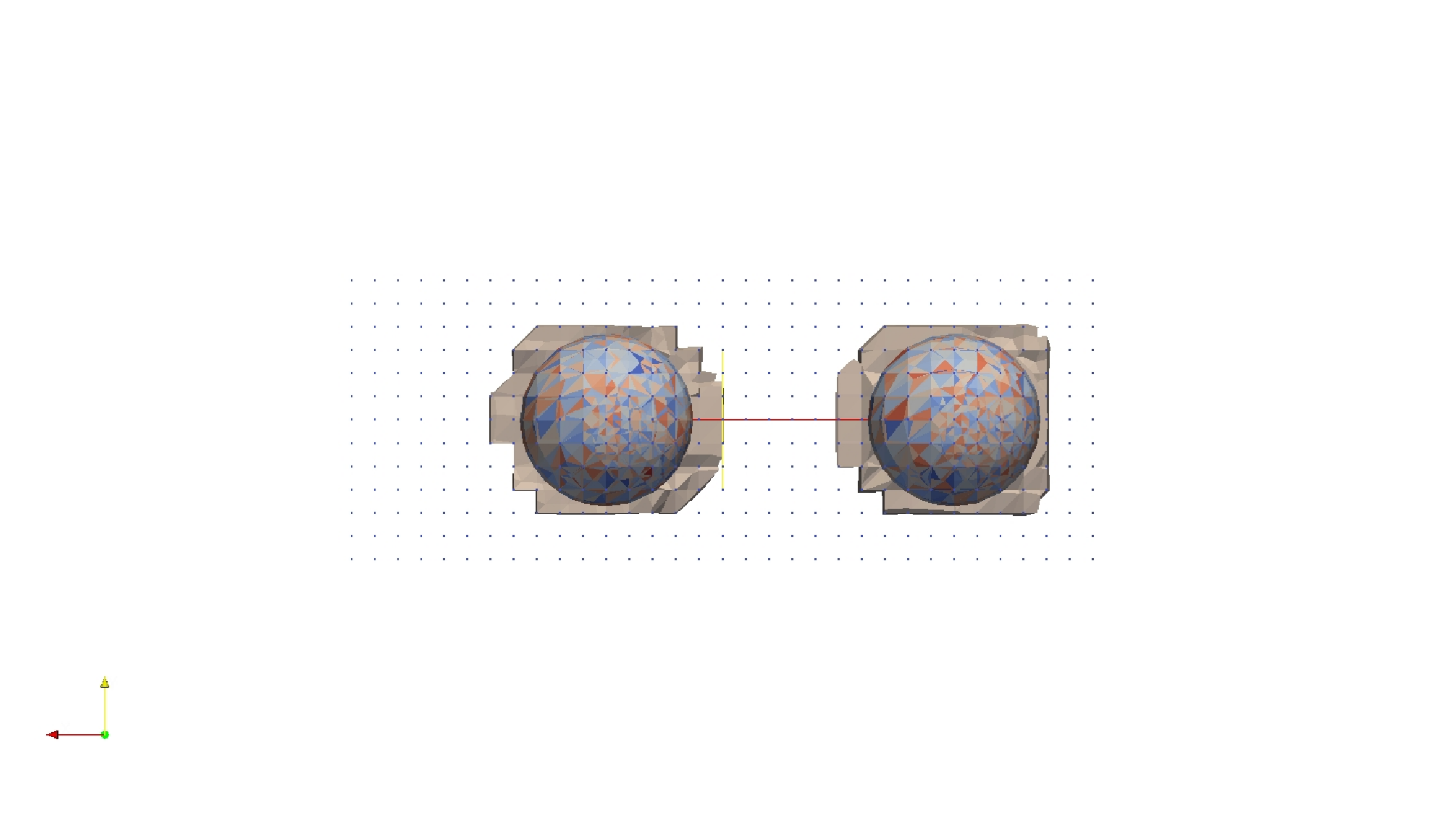}} \\
 prospect view & $x_1 x_2$ view \\
 {\includegraphics[scale=0.2, trim = 8.0cm 6.0cm 8.0cm 6.0cm,
     clip=true,]{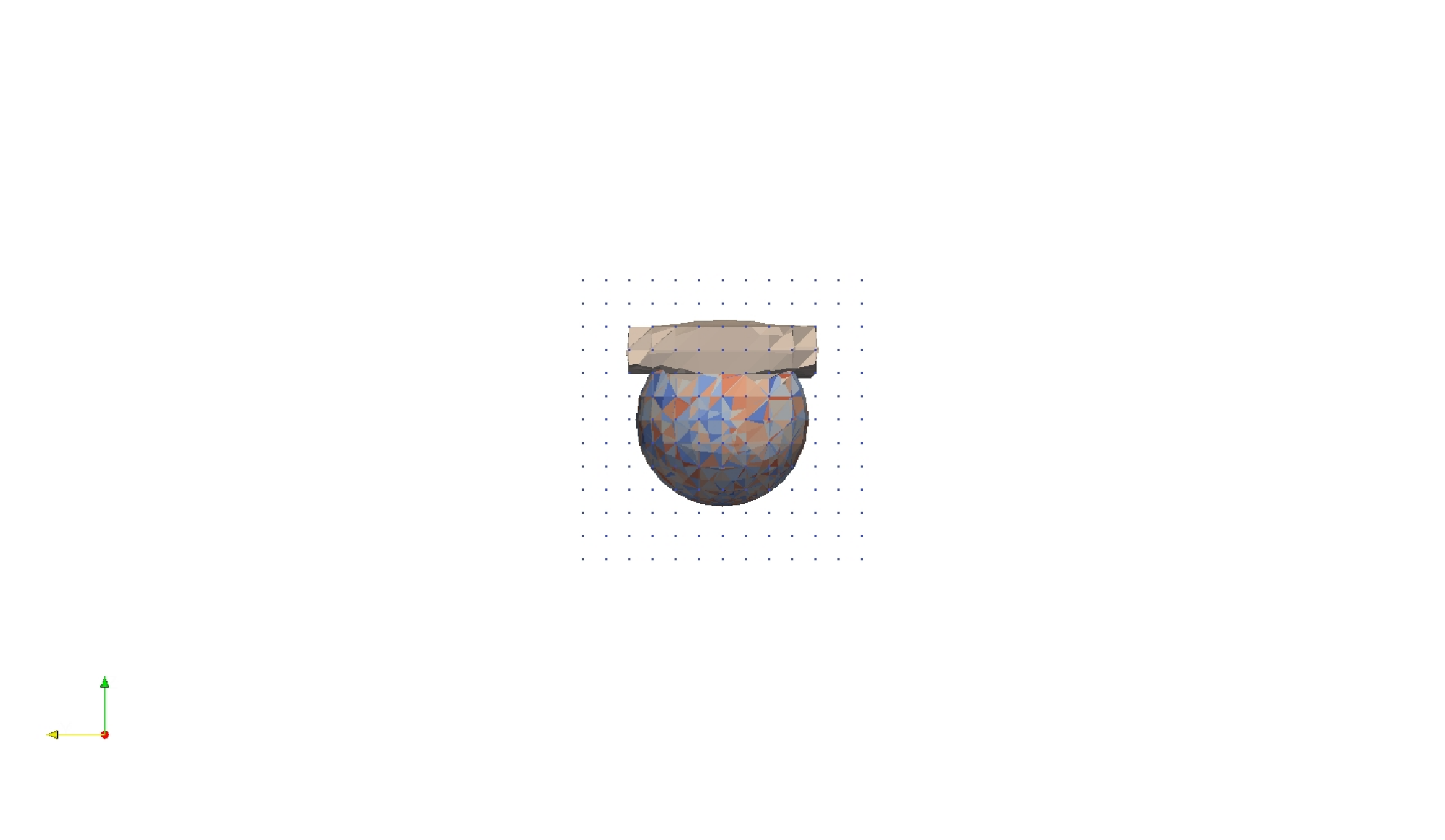}} &
 {\includegraphics[scale=0.2, trim = 8.0cm 6.0cm 8.0cm 6.0cm,
     clip=true,]{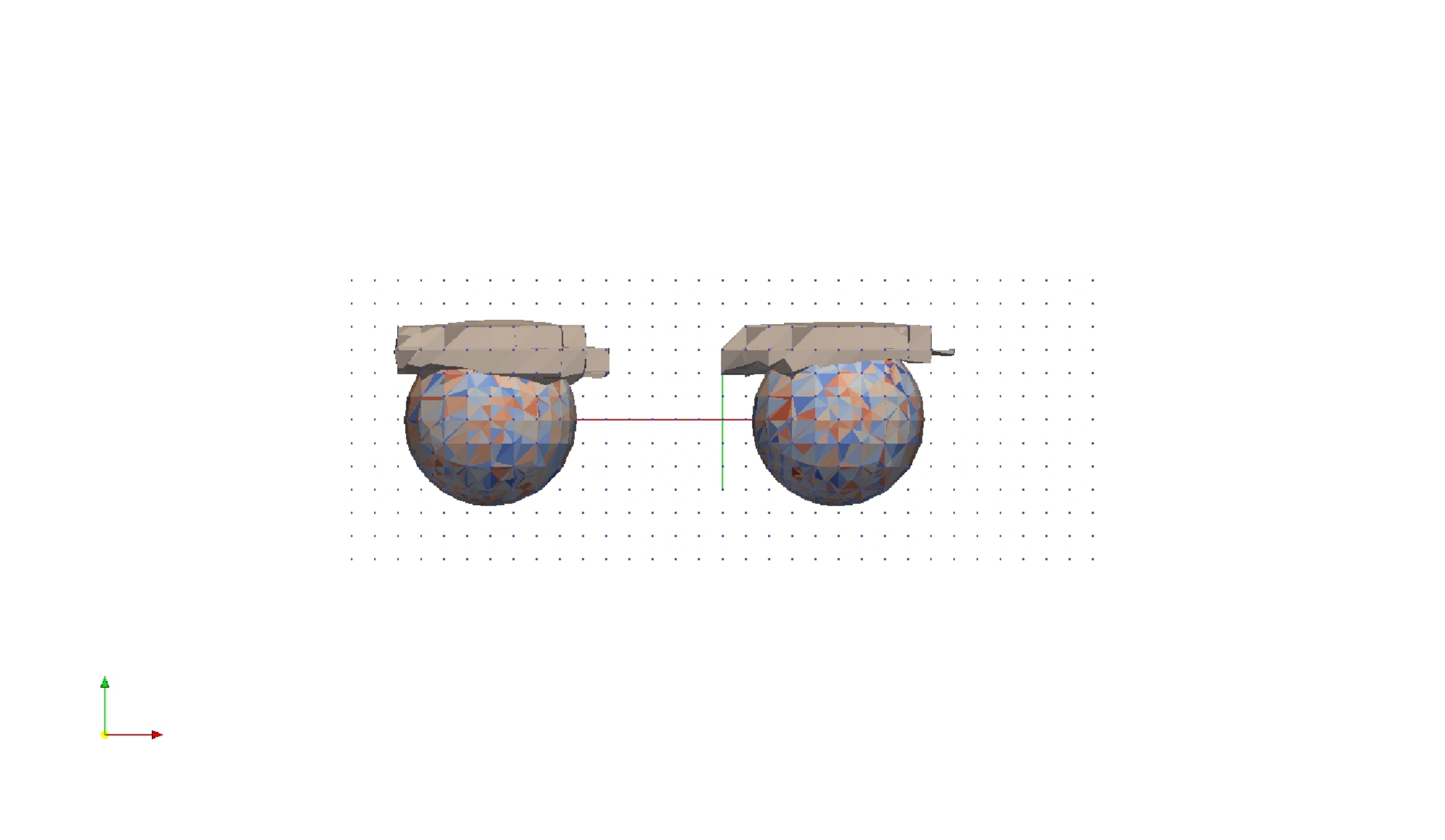}} \\ $x_2 x_3$ view & $x_3 x_1$ view
 \\
\end{tabular}
 \end{center}
 \caption{ \protect\small \emph{ Test 1. Reconstruction of $c(x)$ with
     $\max_{\Omega_{FEM}} c(x) = 8.11$ for $\omega=40$ in (\ref{f})
     with noise level $\sigma=10\%$. The spherical wireframe of the
     isosurface with exact value of the function (\ref{2gaussians}),
     which corresponds to the value of the reconstructed $c=
     0.7\max_{\Omega_{FEM}} c(x)$.} }
 \label{fig:test1noise10}
 \end{figure}

 \subsection{Test 1}

\label{sec:test1}

In this section we present numerical results of the reconstruction of  the
function $c(x)$ given by (\ref{2gaussians}), see Figure
\ref{fig:exact_gaus}-a), b), assuming, that the function $c_0(x,t)=0$.
In this case we obtain results similar to ones of \cite{BAbsorb, BNAbsorb}.

Figures \ref{fig:test1noise10} display results of the reconstruction
of function given by (\ref{2gaussians}) with additive noise
$\sigma=10\%$. Quite similar results are obtained for $\sigma=3\%$,
see Figure \ref{fig:test1noise3}. We observe that the location of the
maximal value of the function (\ref{2gaussians}) is imaged
correctly. It follows from Figure \ref{fig:test1noise3} and Table 1
that the imaged contrast in this function is
$6.66:1=\max_{\Omega_{FEM}} c_{13}:1 $, where $n:=\overline{N}=13$ is
the final iteration in the conjugate gradient method. Similar
observation we made using the Figure \ref{fig:test1noise10} and Table
1 where the imaged contrast is $8.11:1=\max_{\Omega_{FEM}} c_{15}:1 $,
$n:=\overline{N}=15$. However, from these figures we also observe that
because of the data post-processing procedure \cite{BAbsorb} the
values of the background of function (\ref{2gaussians}) are not
reconstructed but are smoothed out. Thus, we are able to reconstruct
only maximal values of the function (\ref{2gaussians}).  Comparison of
Figures \ref{fig:exact_gaus}-c), d), \ref{fig:test1noise3},
\ref{fig:test1noise10} with Figure~\ref{fig:exact_gaus}-a), b) reveals
that it is desirable to improve shape of the function
(\ref{2gaussians}) in $x_3$ direction.

 \subsection{Test 2}

\label{sec:test2}

 \begin{figure}
 \begin{center}
 \begin{tabular}{cc}
 {\includegraphics[scale=0.22,  clip=true,]{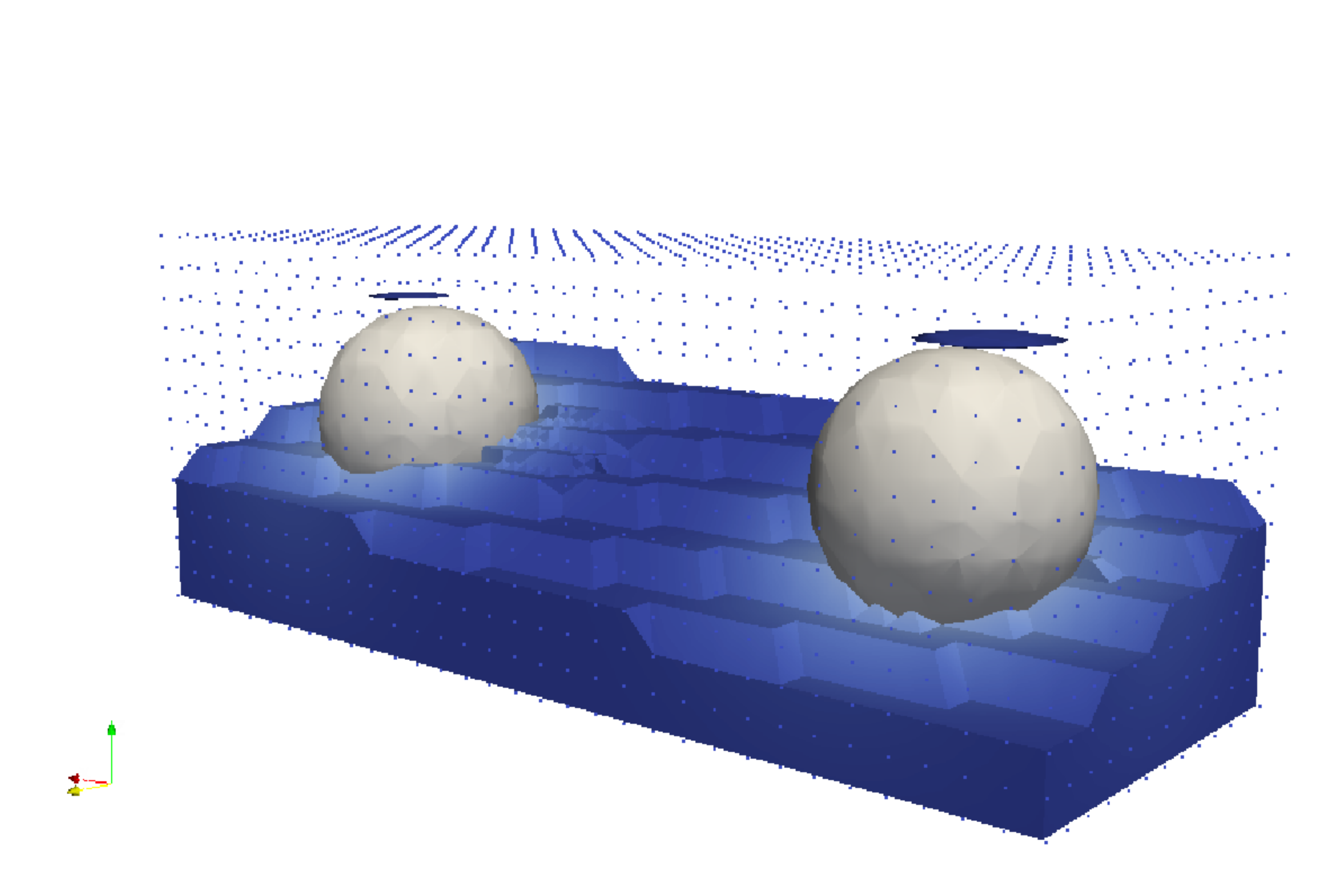}} &
 {\includegraphics[scale=0.22,  clip=true,]{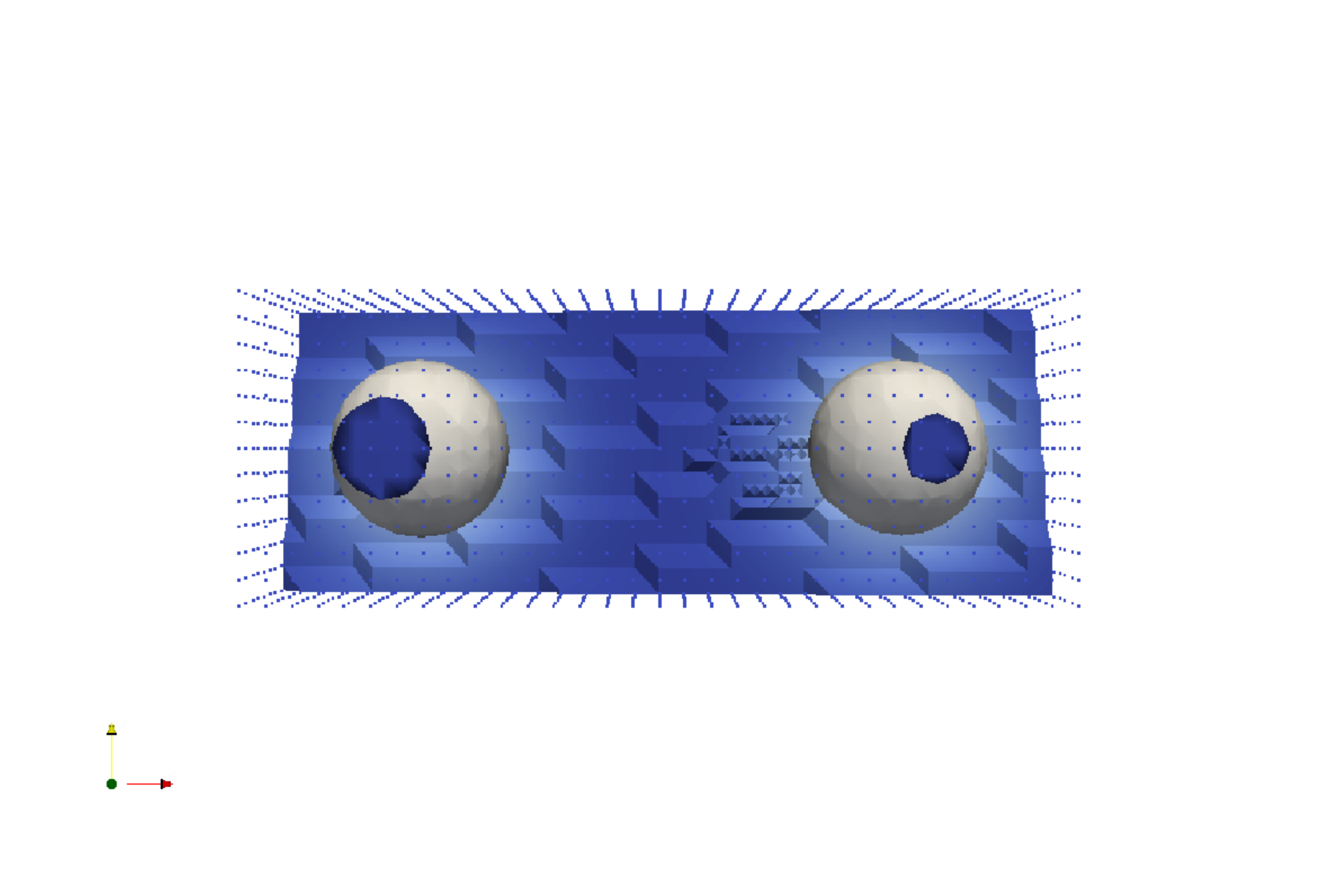}} \\
 prospect view & $x_1 x_2$ view \\
 {\includegraphics[scale=0.22,  clip=true,]{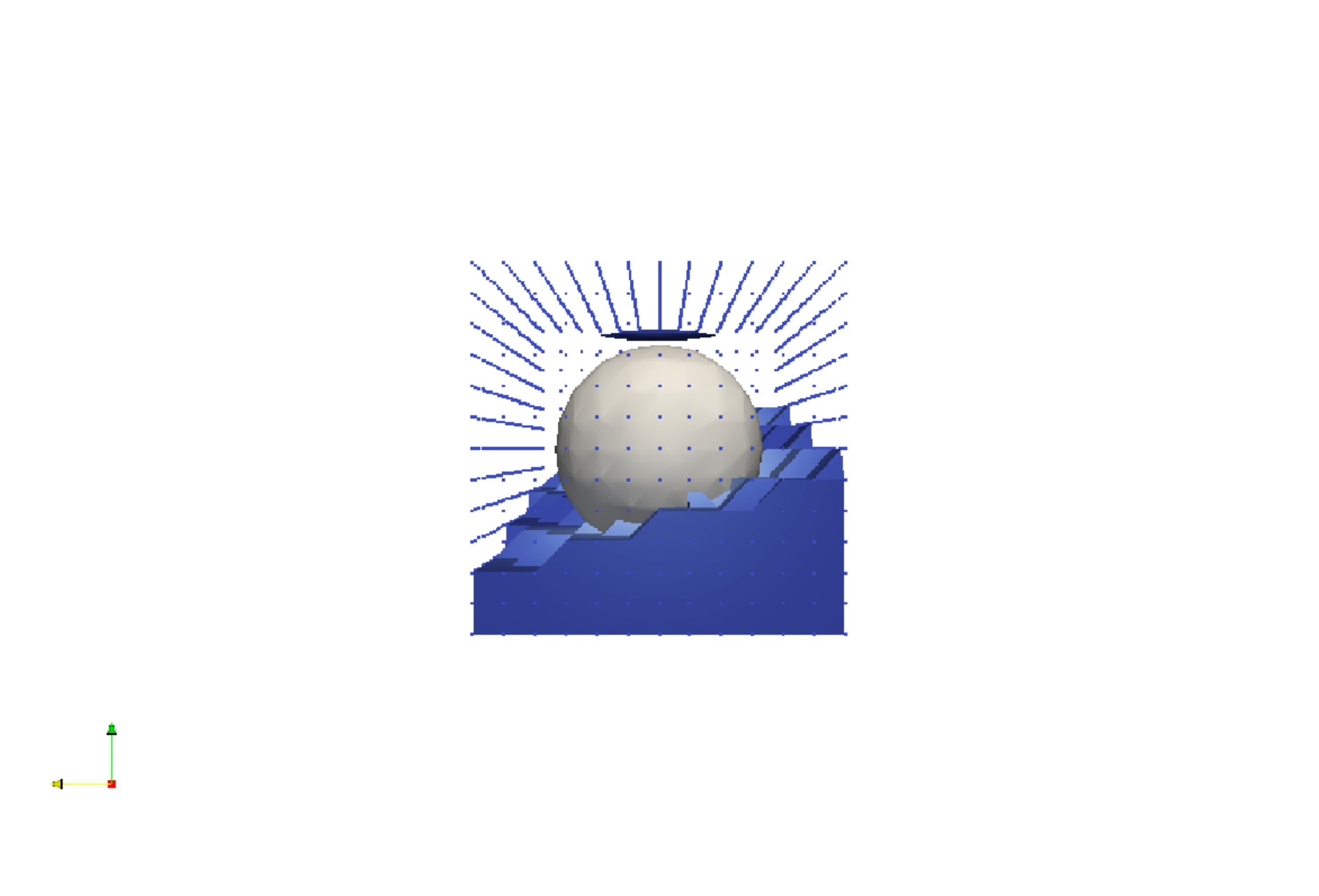}} &
 {\includegraphics[scale=0.22, clip=true,]{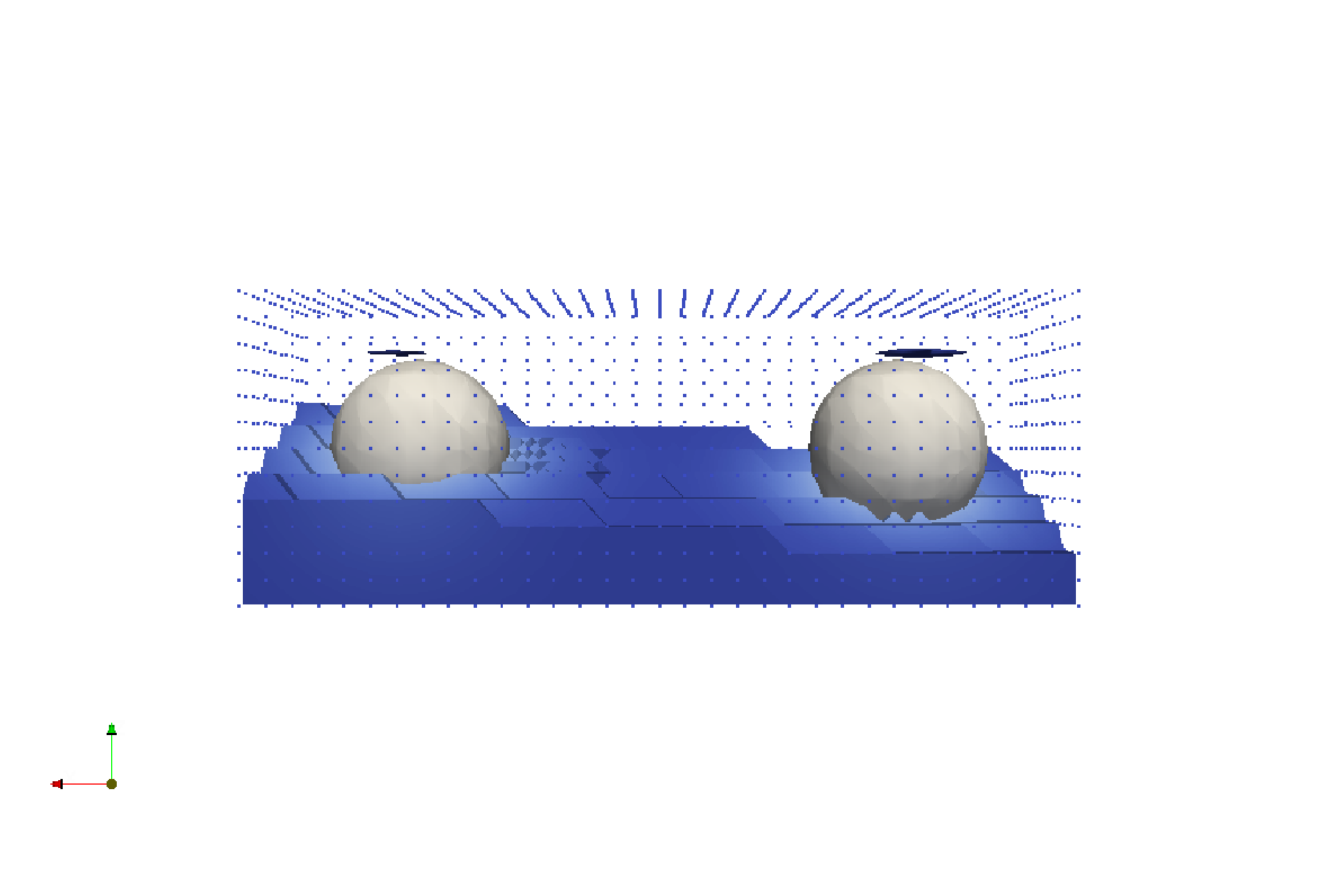}} \\
 $x_2 x_3$ view & $x_3 x_1$ view
 \\
\end{tabular}
 \end{center}
 \caption{ \protect\small \emph{ Test 2. Reconstruction of $c(x)$ with
     $\max_{\Omega_{FEM}} c(x) = 4.1 $ for $\omega=40$ in (\ref{f}) with
     noise level $\sigma=3\%$. We outline also the spherical wireframe
     of the isosurface with exact value of the function
     (\ref{2gaussians}), which corresponds to the value of the
     reconstructed  $c= 0.7\max_{\Omega_{FEM}} c(x)$.} }
 \label{fig:test3noise3a}
 \end{figure}

 \begin{figure}
 \begin{center}
 \begin{tabular}{cc}
 {\includegraphics[scale=0.2,clip=true,]{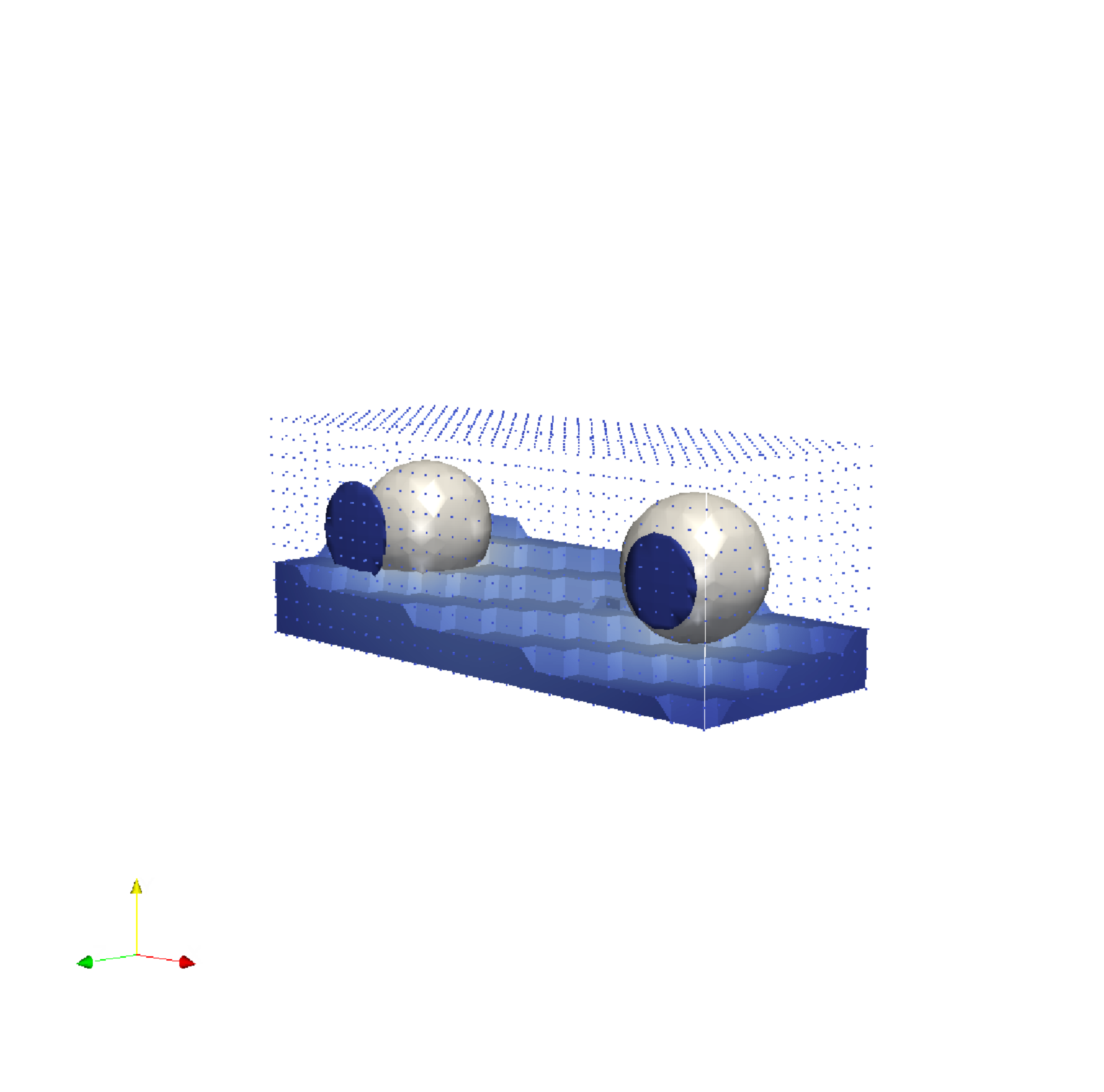}} &
 {\includegraphics[scale=0.15, clip=true,]{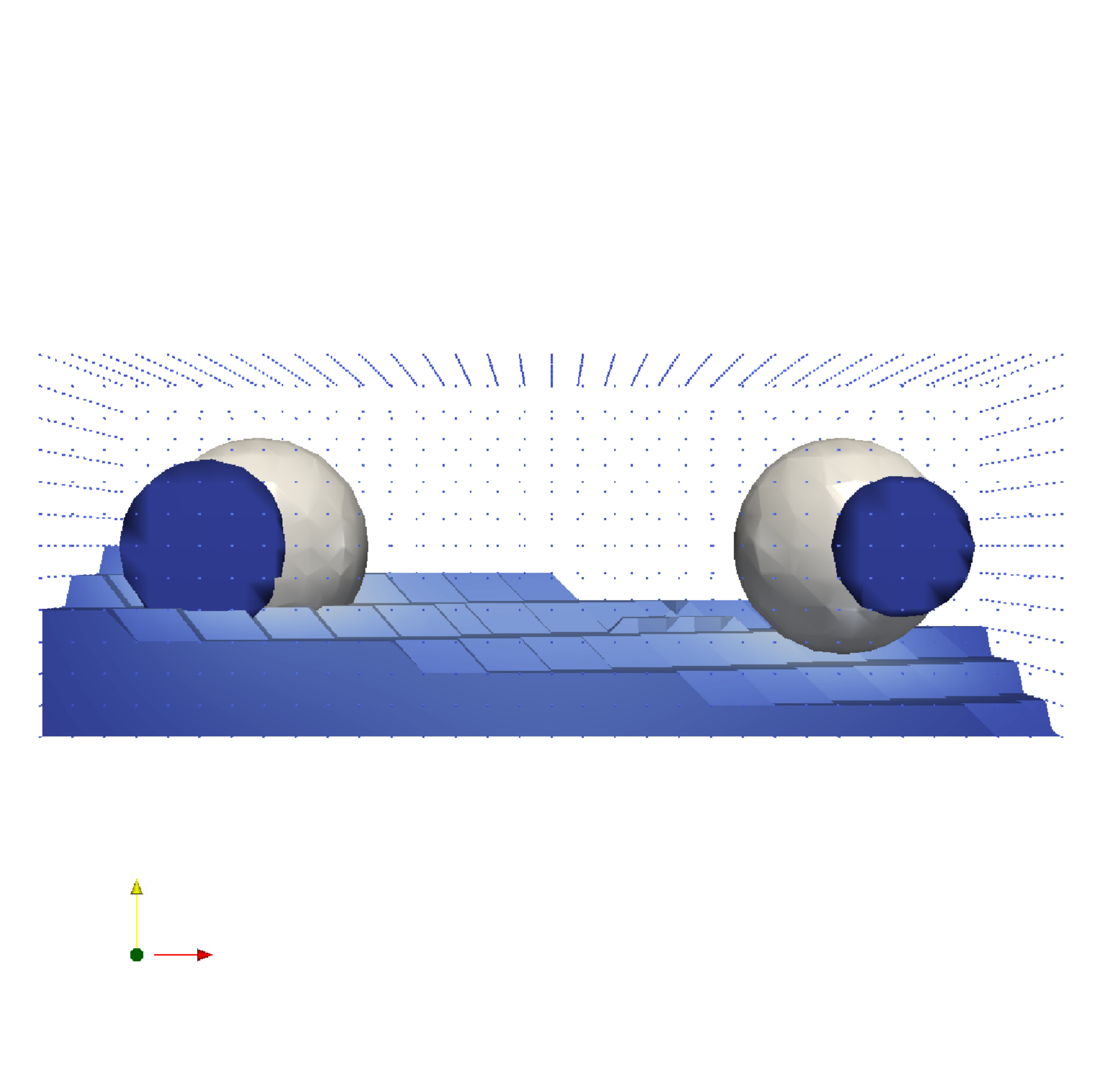}} \\
 prospect view & $x_1 x_2$ view \\
 {\includegraphics[scale=0.15,  clip=true,]{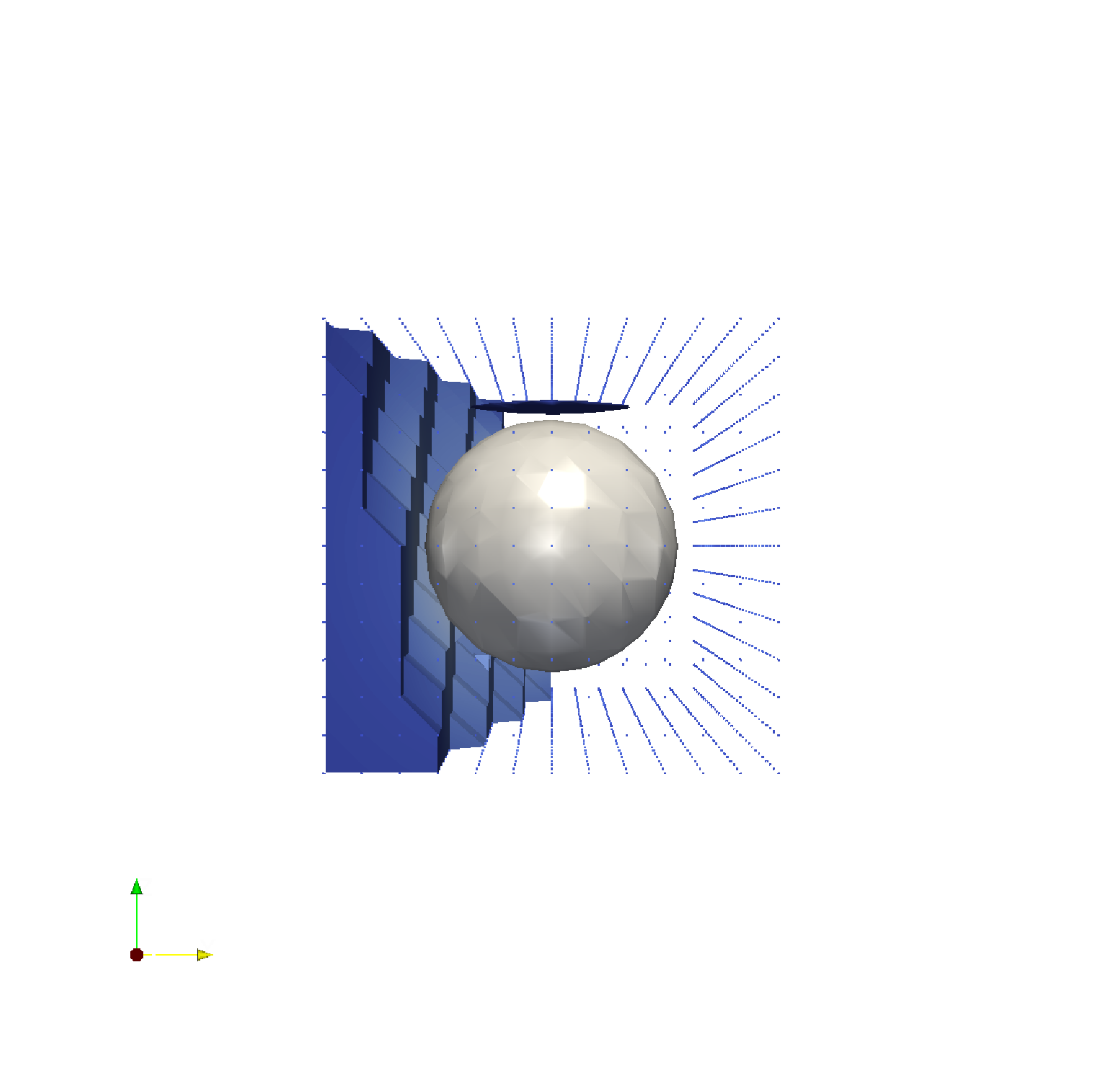}} &
 {\includegraphics[scale=0.15,  clip=true,]{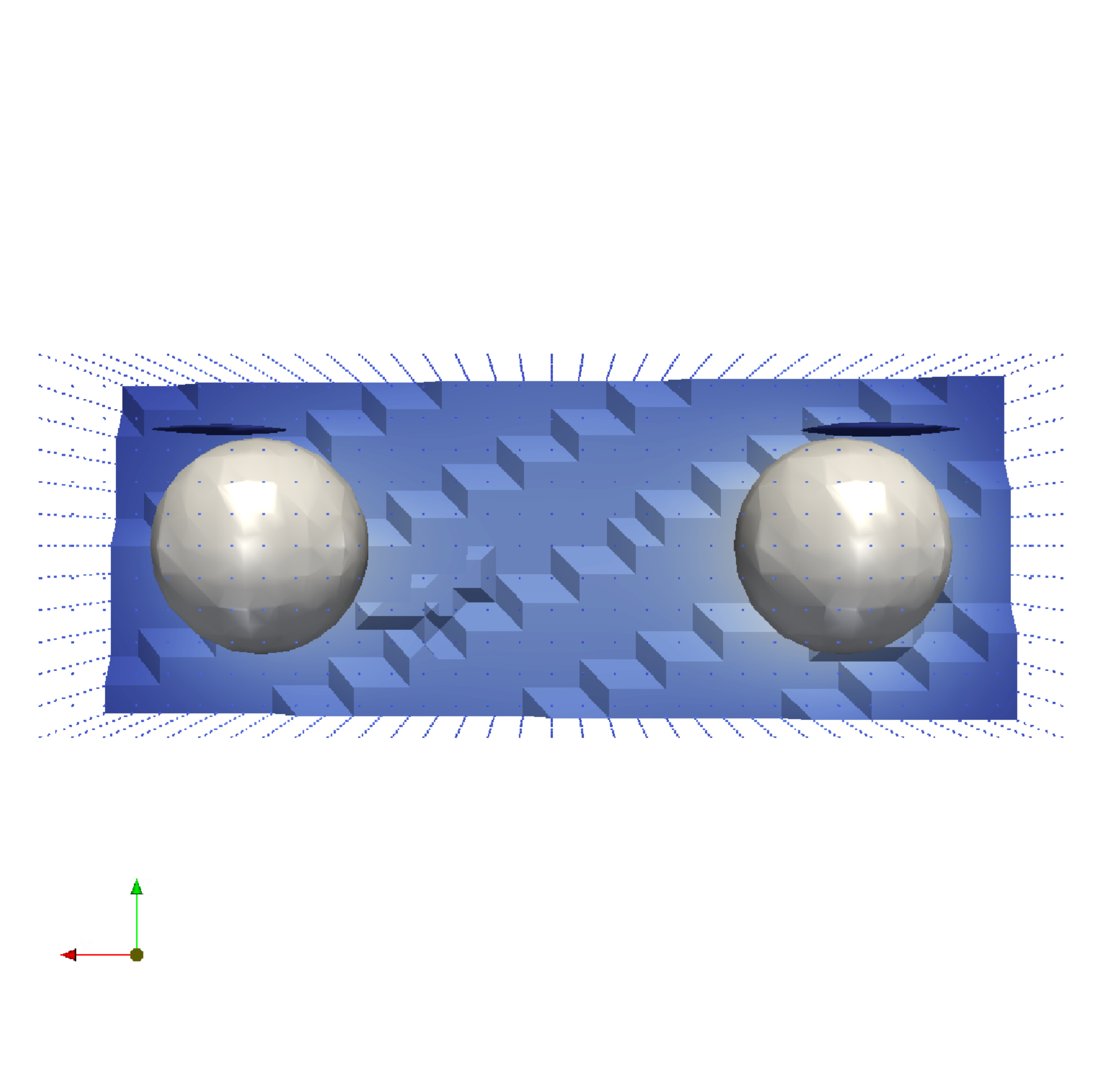}} \\
 $x_2 x_3$ view & $x_3 x_1$ view
 \\
\end{tabular}
 \end{center}
 \caption{ \protect\small \emph{ Test 2. Reconstruction of $c(x)$ with
     $\max_{\Omega_{FEM}} c(x) = 5.58  $ for $\omega=40$ in (\ref{f})
     with noise level $\sigma=10\%$. We outline the spherical wireframe of the
     isosurface with exact value of the function (\ref{2gaussians}),
     which corresponds to the value of the reconstructed  $c= 0.7\max_{\Omega_{FEM}} c(x)$.} }
 \label{fig:test3noise10a}
 \end{figure}

In this numerical test we reconstruct the function $c(x)$ by using
noisy backscattered data.  To get reasonable reconstruction in this
test we runned the conjugade gradient algorithm in time $T=[0,1.5]$
with the time step $\tau=0.003$. We note, that we reduced the
computational time compared with the first test since by running in a
more longer time $T=[0,3.0]$ we have obtained some artifacts at the
middle of the domain. From other side, reducing of the computational
time was resulted in obtaining of a lower contrast in the
reconstructed function, see Table 1.
We tested  reconstruction of the function $c(x)$ with 
 the guess values of $c(x)=1.0$ and
$c_0(x,t)$ as in (\ref{timedepfunc}) since by our assumption this
function is known. 

Figures \ref{fig:test3noise3a}, \ref{fig:test3noise10a} show results
of the reconstruction with $\sigma=3\%$ and $\sigma=10\%$,
respectively.  We observe that the location of the maximal value of
the function (\ref{2gaussians}) is imaged very well.  Again, as in the
previous test, the values of the background in (\ref{2gaussians}) are
smoothed out. Comparing figures with results of reconstruction we
conclude that it is desirable improve  shape of the function  $c(x)$ in $x_3$
  direction.

\section{Conclusions}

\label{sec:concl}

In this work we present theoretical investigations and numerical
studies of the reconstruction of the time and space-dependent
coefficient in an infinite cylindrical hyperbolic domain. In the
theoretical part of this work we derive a local Carleman estimate
which is specially formulated for the infinite cylindrical domain
for the case of time-dependent conductivity function.
  
In the numerical part of the paper we present a computational study of
the reconstruction of function $c(x)$ in a hyperbolic problem
(\ref{model1}) using a hybrid finite element/difference method of
\cite{BAbsorb}.  In our numerical tests, we have obtained stable
reconstruction of the location and contrasts of the function $c(x)$ in
$x_1 x_2$-directions for noise levels $\sigma = 3 \%, 10 \%$ in
backscattered data.  However, size and shape on $x_3$ direction should
still be improved in all test cases.  Similarly with \cite{B,BTKB,BH}
in our future work we plan to apply an adaptive finite element method
in order to get better shapes and sizes of the function $c(x)$ in
$x_3$ direction.

\section*{Acknowledgments}

 The part of the research was done during the sabbatical stay of LB at
 the Institut de Math\'{e}matiques de Marseille, Aix-Marseille
 University, France, which was supported by the sabbatical programme
 at the Faculty of Science, University of Gothenburg, Sweden.


\end{document}